\numberwithin{equation}{section}
\numberwithin{figure}{section}
\numberwithin{table}{section}
\theoremstyle{plain}
\newtheorem{thm}{\protect\theoremname}[section]
\theoremstyle{plain}
\newtheorem{conjecture}[thm]{\protect\conjecturename}
\theoremstyle{plain}
\newtheorem{prop}[thm]{\protect\propositionname}
\theoremstyle{definition}
\newtheorem{defn}[thm]{\protect\definitionname}
\theoremstyle{plain}
\newtheorem{cor}[thm]{\protect\corollaryname}
\theoremstyle{plain}
\newtheorem{lem}[thm]{\protect\lemmaname}
\theoremstyle{remark}
\newtheorem{notation}[thm]{\protect\notationname}
\subjclass[2020]{Primary 05B35; Secondary 52B40, 14N20}
\providecommand{\conjecturename}{Conjecture}
\providecommand{\corollaryname}{Corollary}
\providecommand{\definitionname}{Definition}
\providecommand{\lemmaname}{Lemma}
\providecommand{\notationname}{Notation}
\providecommand{\propositionname}{Proposition}
\providecommand{\theoremname}{Theorem}
\begin{document}
\title[Realizable Sticky Matroid Conjecture]{Realizable Sticky Matroid Conjecture}
\author{Jaeho Shin}
\address{Department of Mathematical Sciences, Seoul National University, Gwanak-ro
1, Gwanak-gu, Seoul 08826, South Korea}
\email{j.shin@snu.ac.kr}
\keywords{Sticky matroid conjecture, Kantor's conjecture, modular extension,
hypermodular matroids, hyperplane arrangements}
\begin{abstract}
We give a criterion for modular extension of rank-$4$ hypermodular
matroids, and prove a weakening of Kantor's conjecture for rank-$4$
realizable matroids. This proves the sticky matroid conjecture and
Kantor's conjecture for realizable matroids due to an argument of
Bachem, Kern, and Bonin, and due to an equivalence argument of Hochstättler
and Wilhelmi, respectively.
\end{abstract}

\maketitle
\tableofcontents{}

\section{Introduction}

We work on finite matroids throughout the paper. Given matroids, there
are several ways to produce a new matroid from them. Surgery techniques
for a rank-$k$ matroid $M$ on $S$ include restriction, deletion,
contraction, truncation, dualizing, etc. One can also pullback or
pushforward any matroid via a set-theoretic map, \cite[Section 1]{j-hope}.
Taking into account the well-known one-to-one correspondence between
matroids and (matroid) base polytopes, cutting the hypersimplex $\Delta_{S}^{k}$
with a half-space of the form $\left\{ x(A)\le\rho\right\} $ for
a positive integer $\rho$ produces a matroid, whether new or not,
\cite[Lemma 5.1]{j-hope2}.

For a number of matroids $M_{1},\dots,M_{n}$ with positive ranks,
we can obtain a new matroid by finding a common extension. For instance,
their direct sum $M_{1}\oplus\cdots\oplus M_{n}$ and the matroid
union $M_{1}\vee\cdots\vee M_{n}$ are matroids, but the rank increase
is inevitable. 

To obtain a matroid of the same rank, one may take the naive union
of the base collections of $M_{1},\dots,M_{n}$ and see if the union
is a base collection of a matroid. But, if $\mathrm{BP}_{M_{1}},\dots,\mathrm{BP}_{M_{n}}$
are face-fitting and glue to a convex polytope, then the union is
indeed a base collection of a matroid of the same rank, \cite[Lemma 3.15]{j-hope}.

Basically, producing a new matroid from given ones is an extension
problem, which is hard just as for other extension problems. Poljak
and Turzik considered a matroid extension problem, \cite{Sticky}.
If $N$ is a matroid with the ground set $E(N)=E_{1}\cup E_{2}$,
they called $N$ an amalgam of two matroids $N|_{E_{1}}$ and $N|_{E_{2}}$.
Conversely, fix a matroid $M$ and let $N_{1}$ and $N_{2}$ be two
arbitrary matroids such that $E(M)=E(N_{1})\cap E(N_{2})$ and $M=N_{1}|_{E(M)}=N_{2}|_{E(M)}$.
If there always exists an amalgam of $N_{1}$ and $N_{2}$, the matroid
$M$ is called a \textbf{sticky matroid}. They showed a modular matroid
is a sticky matroid, but its converse has been a conjecture: 
\begin{conjecture}[Sticky Matroid Conjecture]
 Every sticky matroid is modular.
\end{conjecture}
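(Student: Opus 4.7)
The plan is a two-stage reduction followed by a geometric construction. First, I would invoke the argument of Bachem, Kern, and Bonin, which shows that if some rank-$4$ minor of $M$ fails to be sticky then $M$ itself fails to be sticky; this reduces the conjecture to the rank-$4$ case, since ranks $\le 3$ are classical (the rank-$3$ case collapses to the projective plane). Second, I would invoke the equivalence of Hochst\"attler and Wilhelmi: a rank-$4$ matroid is sticky precisely when every rank-$4$ hypermodular matroid containing it appropriately admits a modular extension. The conjecture thus reduces to Kantor's conjecture in rank $4$, asserting that every rank-$4$ hypermodular matroid extends to a modular one.

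The technical heart is then to produce modular extensions of rank-$4$ hypermodular matroids, and I would first isolate a combinatorial criterion that decides when such an extension exists. The natural route is to examine the lattice of flats of a rank-$4$ hypermodular matroid $N$: hypermodularity forces each hyperplane (rank-$3$ flat) of $N$ to carry projective-plane structure, and a modular extension corresponds to fitting these planes together consistently as the hyperplanes of a single ambient rank-$4$ modular (hence projective) geometry. The criterion should accordingly translate this global compatibility into a finite list of incidence conditions on the common lines and points of pairs of hyperplanes, together with a transversal or gluing condition guaranteeing that no new dependencies are forced beyond those present in $N$.

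For the realizable case, one represents $N$ by a hyperplane arrangement in $\mathbb{P}^{3}$; since $\mathbb{P}^{3}$ itself is a rank-$4$ modular matroid, the ambient projective geometry furnishes the required extension, and the criterion can be verified by classical Desarguesian and Pappian incidence arguments in projective $3$-space. The principal obstacle — and the reason the full conjecture lies beyond this route — is the non-realizable case: absent an ambient projective space, non-Desarguesian or non-Pappian configurations can violate the criterion, and closing that gap appears to require genuinely new matroid-theoretic tools rather than a refinement of the reduction scheme above.
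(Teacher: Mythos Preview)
Your outline shadows the paper's strategy at the top level---reduce to rank $4$, then show rank-$4$ hypermodular matroids extend to modular ones, then handle the realizable case via a projective realization---and you correctly flag that the non-realizable case remains open (the paper does not prove the full conjecture either). But two steps in your plan are genuinely defective, not merely sketchy.

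First, the reduction is miswired. The claim ``a rank-$4$ matroid is sticky precisely when every rank-$4$ hypermodular matroid containing it appropriately admits a modular extension'' is not what Hochst\"attler--Wilhelmi prove; their result is a global equivalence between the sticky matroid conjecture and Kantor's conjecture, not a pointwise characterization of stickiness. The implication you actually need---that the rank-$4$ hypermodular matroid conjecture implies the sticky matroid conjecture---is Bonin's observation building on Bachem--Kern, and it does not route through Hochst\"attler--Wilhelmi at all. The paper uses Bonin directly and only invokes Hochst\"attler--Wilhelmi afterwards, to transfer the conclusion to Kantor's conjecture.

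Second, and more substantively, ``since $\mathbb{P}^{3}$ itself is a rank-$4$ modular matroid, the ambient projective geometry furnishes the required extension'' conceals the whole difficulty. Over an infinite field $\mathbb{P}^{3}$ is not a finite matroid, and naively closing a finite point set under the intersections demanded by modularity need not terminate; a matroid realizable only in characteristic~$0$ cannot simply be dropped into some $\mathrm{PG}(3,q)$. The paper's actual content is a workaround for exactly this: given a non-modular hypermodular $M$ with disjoint flats $F,L$ of ranks $3,2$, it builds specific collections $\mathcal{T},\mathcal{J}_{+},\mathcal{J}^{\sharp}$ of flats and proves that the containment $\mathcal{J}^{\sharp}\supseteq\{\overline{J\cup J'}:J\neq J'\in\mathcal{J}_{+}\}$ is equivalent to the existence of a \emph{single-element} hypermodular extension that strictly decreases the total sum of modular defects. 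For realizable $M$ this containment is verified by the elementary fact that the lines $\sigma(M/T_{i})$ are coplanar in $\mathbb{P}^{3}$ and hence pairwise meet---no Desargues or Pappus is used. Iterating then terminates in finitely many steps. Your ``finite list of incidence conditions'' and ``transversal or gluing condition'' never name an invariant that decreases, so as written the proposal gives no reason the extension process is finite.
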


A matroid is called \textbf{hypermodular} if every pair of two distinct
maximal proper flats is modular. Bonin proposed the following weakening
of Kantor's conjecture which we will call the hypermodular matroid
conjecture, cf. \cite{Kantor,Bonin}.
\begin{conjecture}[Hypermodular Matroid Conjecture]
 Any hypermodular matroid of sufficiently large rank is a restriction
of a modular matroid of the same rank.
\end{conjecture}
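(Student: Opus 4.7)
The plan is to proceed by induction on the rank $r$ of the hypermodular matroid $M$. The base case is the smallest rank at which the statement becomes nontrivial; the rank-$4$ criterion announced in this paper is what seeds the induction. For the inductive step with $r$ sufficiently large, I would first verify that hypermodularity descends to hyperplane restrictions: if $H$ is a hyperplane of $M$, then any two distinct hyperplanes of $M|H$ arise as intersections $F_1 \cap H$ and $F_2 \cap H$ with hyperplanes $F_1, F_2$ of $M$ distinct from $H$, and the rank identities imposed by the modular pairs $(F_i, H)$ and $(F_1, F_2)$ in $M$ propagate to make $(F_1 \cap H, F_2 \cap H)$ a modular pair in $M|H$, so that $M|H$ is hypermodular of rank $r-1$.

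Given that, each hyperplane restriction $M|H$ admits by induction a modular extension $\widetilde{N}_H$ of rank $r-1$. The Kahn--Kung classification shows that modular matroids of rank $\ge 4$ are, up to trivial summands, projective geometries $\mathrm{PG}(r-2,\mathbb{F})$ over some (skew) field $\mathbb{F}$. The objective thus becomes embedding $M$ into a rank-$r$ projective geometry $\mathrm{PG}(r-1,\mathbb{F})$ in such a way as to recover each hyperplane extension. I would fix the extension on one hyperplane $H$ and attempt to lift it across $M$ point-by-point, using lines from each new point to $H$; a second hyperplane $H'$ then acts as a cross-check via the common rank-$(r-2)$ flat $H \cap H'$, on which both inductive extensions must already agree.

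The hard part will be ensuring that the field $\mathbb{F}$ is intrinsically determined by $M$ and that the local extensions attached to different hyperplanes glue consistently into a single global projective geometry. Hypermodularity supplies the necessary modular pairs, but converting this abundant local incidence data into a coherent coordinatization---particularly in the non-realizable regime---will require tight control. The rank-$4$ criterion presumably encodes exactly the compatibility test that forces uniqueness at the base, and propagating this rigidity through all higher ranks, together with a clean formulation of the ``sufficiently large rank'' threshold, is the principal obstacle I anticipate.
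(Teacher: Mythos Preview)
The statement you are attempting to prove is a \emph{conjecture} in the paper, not a theorem; the paper establishes only the rank-$4$ realizable case (Theorem~\ref{thm:main}) and then invokes outside work of Bachem--Kern--Bonin and Hochst\"attler--Wilhelmi to deduce the sticky matroid and Kantor conjectures for realizable matroids. There is no proof of the full Hypermodular Matroid Conjecture in the paper to compare against, and your inductive scheme does not supply one either.

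Concretely, your inductive step fails at its first claim: hypermodularity does \emph{not} descend to hyperplane restrictions. In your notation, the pair $(F_1\cap H,\,F_2\cap H)$ is modular in $M|_H$ only if $r(F_1\cap F_2\cap H)=r-3$, but hypermodularity controls only pairs of hyperplanes, not the pair $(F_1\cap F_2,\,H)$ of a corank-$2$ flat with a hyperplane. The paper's own results furnish the counterexample: by Lemma~\ref{lem:equiv} and Proposition~\ref{prop:equiv}, any rank-$4$ hypermodular non-modular matroid $M$ contains a rank-$3$ flat $T$ with two disjoint rank-$2$ flats inside it, so $M|_T$ is a rank-$3$ matroid with two disjoint hyperplanes and is therefore not hypermodular. (What the paper does show, in Proposition~\ref{prop:inherit-HM}, is that hypermodularity passes to \emph{contractions}, not restrictions.) With the base of your induction gone, the subsequent gluing program---already acknowledged as the hard part---has nothing to stand on.

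The paper's actual method is quite different and makes no attempt at induction on rank. It stays in rank~$4$, fixes a non-modular pair $(F,L)$, and proves a combinatorial criterion (Theorem~\ref{thm:criterion}) for a single-element extension that strictly decreases the total modular defect while preserving hypermodularity. Realizability is used to verify that criterion geometrically in $\mathbb{P}^3$: all the relevant lines $\sigma(M/T_i)$ lie in a common plane, which forces $\overline{J\cup J'}\in\mathcal{J}^{\sharp}$ for every $J,J'\in\mathcal{J}_+$. Iterating the extension yields a modular (and still realizable) rank-$4$ matroid containing $M$.
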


This conjecture holds in rank $3$, Proposition \ref{prop:rk3-mod}.
Bonin pointed out that if it holds in rank $4$, the sticky matroid
conjecture holds due to an argument of Bachem, Kern, and Bonin, \cite{BK88,Bonin}.
Also, Hochstättler and Wilhelmi \cite{HW19} showed that the sticky
matroid conjecture is equivalent to Kantor's conjecture which is:
\begin{conjecture}[Kantor's Conjecture]
 Every hypermodular matroid is embeddable in a modular matroid.
\end{conjecture}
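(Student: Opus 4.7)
The plan is to leverage the known reductions and then attack a single rank-specific combinatorial problem. By the equivalence theorem of Hochst\"attler and Wilhelmi, Kantor's conjecture is equivalent to the Sticky Matroid Conjecture; and by the argument of Bachem, Kern, and Bonin, the Sticky Matroid Conjecture follows from the Hypermodular Matroid Conjecture in rank $4$. Thus the task collapses to a single statement: every rank-$4$ hypermodular matroid $M$ must be shown to be a restriction of a rank-$4$ modular matroid.

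For this rank-$4$ problem, I would apply the criterion for modular extension of rank-$4$ hypermodular matroids established earlier in this paper. Since hypermodularity forces every pair of distinct hyperplanes of $M$ to meet in a line, the whole combinatorial content of $M$ is the incidence pattern of its points, lines, and hyperplanes. Constructing a modular extension amounts to adjoining, for each line $L$ and each hyperplane $H$ with $L\not\subseteq H$, a point in $L\cap H$ whenever such a point is missing, and then verifying that the augmented structure is a matroid of the same rank whose lattice of flats matches the prescription demanded by modularity.

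The main obstacle, and the reason the full conjecture remains open, is consistency of these adjunctions in the non-realizable case. When $M$ is realizable, an ambient projective $3$-space forces any two intersection points constructed on the same line to coincide, and the modular extension is essentially read off the hyperplane arrangement; this is exactly what the present paper exploits. Without realizability no ambient space is available, and distinct pairs of hyperplanes may produce a priori distinct ``new'' points on the same line that the matroid axioms alone do not force to collapse; failure of such coincidences is precisely the phenomenon responsible for matroids like V\'amos.

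A promising route is an inductive construction that processes one line at a time while tracking newly-created coincidences, combined with a finite case analysis on small sub-configurations of three or four hyperplanes sharing a common line or point. The hardest step will be to isolate the exact combinatorial invariant that distinguishes genuine obstructions from extendable configurations, and to prove that hypermodularity of $M$ rules out the former at every stage of the inductive adjunction. A complementary strategy, should this direct approach stall, is to work on the dual side via the base-polytope picture used elsewhere in the paper, asking whether the base polytope of $M$ admits a face-fitting enlargement to the base polytope of a modular matroid of the same rank.
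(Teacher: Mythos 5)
Your proposal does not prove the statement, and the gap is one you yourself concede. The reduction chain you set up is exactly the one the paper uses (Hochst\"attler--Wilhelmi equivalence of Kantor's conjecture with the sticky matroid conjecture, plus the Bachem--Kern--Bonin argument reducing the latter to the rank-$4$ hypermodular matroid conjecture), and your sketch of the realizable case is in substance the paper's mechanism: realizability supplies an ambient $\mathbb{P}^{3}$ in which the line joining $\sigma(M/F)$ to $\sigma(M/\overline{J\cup J'})$ must pass through some point $\sigma(M/A_{i})$, which verifies the hypothesis $\mathcal{J}^{\sharp}\supseteq\{\overline{J\cup J'}:J,J'\in\mathcal{J}_{+},J\neq J'\}$ of the single-element extension criterion (Theorem \ref{thm:criterion}); one then iterates, decreasing the total sum of modular defects, until Lemma \ref{lem:equiv} yields modularity. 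But for the statement as posed --- \emph{every} hypermodular matroid embeds in a modular one --- you offer only a program (``a promising route,'' ``the hardest step will be\ldots''), explicitly acknowledging that without realizability nothing forces the newly adjoined intersection points on a common line to coincide consistently. That unresolved consistency problem is precisely the content of the conjecture, so what you have written is a strategy outline, not a proof.

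To be fair, the paper does not prove this statement in full generality either: it is displayed there as a conjecture, and the paper's actual theorems (Theorem \ref{thm:main} and its corollary) establish only the realizable case. So the accurate assessment is that your proposal, where it is substantive, tracks the paper's argument for realizable matroids essentially step for step, and where it goes beyond that (the general, non-realizable case) it leaves the decisive step --- proving that hypermodularity alone forces the extension criterion, or otherwise ruling out V\'amos-like obstructions at every stage of the iterative adjunction --- entirely open. If your intent was to prove only the realizable version, you should say so explicitly and then carry out the verification of the criterion and the termination of the defect-decreasing iteration in detail, rather than gesturing at them.
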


We prove the hypermodular matroid conjecture for rank-$4$ realizable
matroids, and hence prove both the sticky matroid conjecture and Kantor's
conjecture for realizable matroids.

\subsection*{Acknowledgements}

The author would like to thank Joseph Bonin that he kindly informed
him of the recent related works. The author is also grateful to James
Oxley for his comments on Conjecture 15.9.4 of his book \cite{Oxley}.
He would like to thank Michael Wilhelmi for helpful conversations.

\section{Preliminaries}

Among many different definitions of a matroid, we intensively use
rank axioms and flat axioms.
\begin{prop}[Matroid Rank Axioms]
For a finite set $S$, let $r$ be a $\mathbb{Z}_{\ge0}$-valued
function defined on the power set $2^{S}$ of $S$ such that 
\begin{enumerate}[label=(R\arabic*),itemsep=1pt]
\item \label{enu:(R1)}$0\le r(A)\le\left|A\right|$ for all $A\in2^{S}$,
\item \label{enu:(R2)}$r(A)\le r(B)$ for all $A,B\in2^{S}$ with $A\subseteq B$,
\item \label{enu:(R3)}$r(A\cup B)+r(A\cap B)\le r(A)+r(B)$ for all $A,B\in2^{S}$,
\end{enumerate}
where \ref{enu:(R3)} is called the \textbf{submodularity}. Then,
$r$ is the rank function of a matroid.
\end{prop}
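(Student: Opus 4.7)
The plan is to extract from $r$ a candidate collection of independent sets, namely $\mathcal{I}:=\{A\subseteq S:r(A)=|A|\}$, verify the classical independence axioms (I1)--(I3) for $\mathcal{I}$, and then check that the rank function $r_{\mathcal{I}}$ of the resulting matroid coincides with $r$.

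First I would dispose of (I1) and the hereditary axiom (I2). Axiom (I1) is immediate from \ref{enu:(R1)}, since $r(\emptyset)=0=|\emptyset|$. For (I2), given $B\subseteq A\in\mathcal{I}$, a one-element-at-a-time application of \ref{enu:(R3)} yields the sub-additivity $r(A)\le r(B)+|A\setminus B|$. Combined with $r(A)=|A|$ and \ref{enu:(R1)}, this forces $r(B)=|B|$, so $B\in\mathcal{I}$.

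The main work is the exchange axiom (I3). Suppose $A,B\in\mathcal{I}$ with $|A|<|B|$; the goal is to produce $e\in B\setminus A$ with $A\cup\{e\}\in\mathcal{I}$. I would argue by contradiction: assume $r(A\cup\{e\})=r(A)$ for every $e\in B\setminus A$. Enumerating $B\setminus A=\{e_{1},\dots,e_{k}\}$, I would show by induction on $j$ that $r(A\cup\{e_{1},\dots,e_{j}\})=r(A)$, applying \ref{enu:(R3)} to $A\cup\{e_{1},\dots,e_{j-1}\}$ and $A\cup\{e_{j}\}$ at each step (their intersection is exactly $A$). Taking $j=k$ then forces $r(A\cup B)=r(A)=|A|<|B|=r(B)$, contradicting \ref{enu:(R2)} since $B\subseteq A\cup B$. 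Hence an augmenting element $e$ exists.

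Finally, for any $A\subseteq S$ pick a maximal $B\subseteq A$ with $B\in\mathcal{I}$; by definition $r_{\mathcal{I}}(A)=|B|=r(B)$. Maximality together with (I2) gives $r(B\cup\{e\})=|B|=r(B)$ for every $e\in A\setminus B$, and the same submodular induction as in the previous paragraph upgrades this elementwise equality to $r(A)=r(B)=r_{\mathcal{I}}(A)$. The only substantive step is (I3); everything else reduces to routine bookkeeping with \ref{enu:(R1)}--\ref{enu:(R3)}, and I do not anticipate a genuine obstacle since this is the classical equivalence between the rank-function and independent-set axiomatizations of a matroid.
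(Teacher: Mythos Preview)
Your argument is correct: defining $\mathcal{I}=\{A\subseteq S:r(A)=|A|\}$, verifying (I1)--(I3), and then showing $r_{\mathcal{I}}=r$ via the submodular induction is the standard route, and each step you outline goes through as written.

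As for comparison with the paper: there is nothing to compare against. The paper states this proposition as a preliminary fact (one of the cryptomorphic axiomatizations of a matroid) and supplies no proof; it is quoted as background, in the same spirit as the flat axioms that follow it. Your write-up therefore goes well beyond what the paper does, but since the result is textbook material (e.g.\ Oxley, Chapter~1), that is entirely appropriate and your proof would serve perfectly well if one wished to make the paper self-contained on this point.
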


\begin{prop}[Matroid Flat Axioms]
 A nonempty subcollection $\mathcal{A}\subseteq2^{S}$ with $S\in\mathcal{A}$
is the lattice of a matroid on $S$ if it satisfies the following
axioms.
\begin{enumerate}[label=(F\arabic*),itemsep=1pt]
\item \label{enu:(F1)} For $\ensuremath{F,L\in\mathcal{A}}$, one has
$F\cap L\in\mathcal{A}$.
\item \label{enu:(F2)} For $F\in\mathcal{A}$ and $s\in S-F$, the smallest
member $L$ of $\mathcal{A}$ containing $F\cup\left\{ s\right\} $
covers $F$, that is, there is no member of $\mathcal{A}$ between
$F$ and $L$.
\end{enumerate}
Let $M$ be the corresponding matroid. We write $\mathcal{A}=\mathcal{L}(M)$
and denote by $\mathcal{L}^{\left(i\right)}(M)$ the collection of
rank-$i$ flats of $M$.
\end{prop}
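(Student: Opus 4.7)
The plan is to produce from $\mathcal{A}$ a closure operator on $2^{S}$ satisfying the Mac Lane--Steinitz axioms, with $\mathcal{A}$ as its set of fixed points. Concretely, I would set
\[
\mathrm{cl}(A) := \bigcap \{F \in \mathcal{A} : A \subseteq F\},
\]
which is well-defined since $S \in \mathcal{A}$ makes the family nonempty, and which itself lies in $\mathcal{A}$ by iterating (F1) over the finitely many members of this intersection. Extensiveness $A \subseteq \mathrm{cl}(A)$, monotonicity $A \subseteq B \Rightarrow \mathrm{cl}(A) \subseteq \mathrm{cl}(B)$, and idempotence $\mathrm{cl}(\mathrm{cl}(A)) = \mathrm{cl}(A)$ are all immediate from the definition together with the observation that $\mathrm{cl}(A) \in \mathcal{A}$, so these require no real work.

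The one substantive step, and the only place where (F2) enters, is the exchange property: if $y \in \mathrm{cl}(A \cup \{x\}) - \mathrm{cl}(A)$ then $x \in \mathrm{cl}(A \cup \{y\})$. This is the expected main obstacle, though (F2) is tailored for precisely this argument. Writing $F := \mathrm{cl}(A)$, I would first note $x \notin F$ (otherwise $y \in \mathrm{cl}(A \cup \{x\}) = F$, contradicting $y \notin \mathrm{cl}(A)$), and then set
\[
L := \mathrm{cl}(F \cup \{x\}) = \mathrm{cl}(A \cup \{x\}), \qquad L' := \mathrm{cl}(F \cup \{y\}).
\]
By (F2), both $L$ and $L'$ cover $F$ in $\mathcal{A}$. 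Since $y \in L$, we have $F \cup \{y\} \subseteq L$ and hence $L' \subseteq L$; but $L$ covers $F$ and $F \subsetneq L'$, leaving no room for a strict intermediate member, so $L' = L$. Therefore $x \in L = L' \subseteq \mathrm{cl}(A \cup \{y\})$, completing the exchange check.

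Once these axioms hold, $\mathrm{cl}$ is a matroid closure operator defining a matroid $M$ on $S$, whose flats are exactly the sets fixed by $\mathrm{cl}$. These fixed sets coincide with $\mathcal{A}$: every $F \in \mathcal{A}$ trivially satisfies $\mathrm{cl}(F) = F$, while conversely any closed set $\mathrm{cl}(A)$ was already seen to lie in $\mathcal{A}$. Hence $\mathcal{A} = \mathcal{L}(M)$, and the entire argument reduces to the verification of the Mac Lane--Steinitz exchange via the covering clause of (F2).
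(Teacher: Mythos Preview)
The paper does not supply a proof of this proposition; it is stated as one of the standard cryptomorphic axiomatizations of a matroid, on the same footing as the rank axioms immediately preceding it, and is used only as background. Your argument is the textbook verification via the Mac Lane--Steinitz closure axioms and is correct: the only nontrivial step is the exchange property, and your covering argument from (F2) handles it cleanly (in particular, the identification $\mathrm{cl}(A\cup\{x\})=\mathrm{cl}(F\cup\{x\})$ with $F=\mathrm{cl}(A)$ is justified because any member of $\mathcal{A}$ containing $A$ already contains $F$). Since the paper offers nothing to compare against, there is no methodological difference to discuss.
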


\begin{defn}
If $r(A\cup B)+r(A\cap B)=r(A)+r(B)$ in \ref{enu:(R3)}, the pair
$\left\{ A,B\right\} $ is called \textbf{modular}. The difference
$r(A)+r(B)-r(A\cup B)-r(A\cap B)\ge0$ is called the \textbf{modular
defect} of the pair.
\end{defn}

Throughout the paper, a modular defect of a matroid $M$ means a modular
defect of a pair of flats of $M$ unless otherwise specified. 
\begin{defn}
A flat $F$ is called \textbf{modular} if $\left\{ F,L\right\} $
is a modular pair for all flats $L$. The matroid $M$ is called \textbf{modular}
if every flat is modular.
\end{defn}

\begin{prop}
\label{prop:mod-rk1}Let $F$ be a rank-$1$ flat of a matroid $M$.
For an arbitrary flat $L$, the pair $\left\{ F,L\right\} $ is modular,
and hence the flat $F$ is modular.
\end{prop}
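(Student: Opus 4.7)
My plan is to verify the modular identity
\[
r(F\cup L)+r(F\cap L)=r(F)+r(L)
\]
directly from the axioms, splitting cases on whether $F\subseteq L$. Since \ref{enu:(R3)} already supplies the inequality $\le$, the content is to pin down both $r(F\cap L)$ and $r(F\cup L)$ exactly, securing the reverse inequality.

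The case $F\subseteq L$ is immediate: $F\cup L=L$ and $F\cap L=F$, so both sides equal $r(F)+r(L)$. Assume instead $F\not\subseteq L$ and fix some $s\in F\setminus L$. By \ref{enu:(F1)} the intersection $F\cap L$ is a flat, and it is strictly contained in $F$ because $s\in F\setminus(F\cap L)$. The standard consequence of the axioms that a proper sub-flat has strictly smaller rank---if two flats $F'\subsetneq F''$ had equal rank, then any $x\in F''\setminus F'$ would, by \ref{enu:(R2)}, satisfy $r(F'\cup\{x\})=r(F')$ and so lie in the closure of, hence in, the flat $F'$---forces $r(F\cap L)<r(F)=1$, whence $r(F\cap L)=0$. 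Since $L$ is a flat and $s\notin L$, the same observation gives $r(L\cup\{s\})\ne r(L)$, and then \ref{enu:(R3)} with $B=\{s\}$ bounds the increment by one, yielding $r(L\cup\{s\})=r(L)+1$. Hence \ref{enu:(R2)} delivers $r(F\cup L)\ge r(L\cup\{s\})=r(L)+1$.

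Combining, $r(F\cup L)+r(F\cap L)\ge(r(L)+1)+0=r(F)+r(L)$, and together with \ref{enu:(R3)} this gives the desired equality. I do not foresee any real obstacle: the argument is essentially a mechanical verification once the two standard consequences of the flat axioms---that an element outside a flat raises the rank by exactly one, and that a proper sub-flat has strictly smaller rank---are in hand. Since $L$ was arbitrary, the modularity of every pair $\{F,L\}$ shows $F$ is a modular flat by definition.
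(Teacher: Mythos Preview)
Your proof is correct and follows essentially the same case split as the paper: the paper simply asserts that either $F\subseteq L$ (giving $r(F\cap L)=r(F)$, $r(F\cup L)=r(L)$) or $F\cap L$ has rank $0$ and $r(F\cup L)=r(L)+1$, while you spell out in detail why the proper sub-flat has strictly smaller rank and why adjoining an element outside a flat increases the rank by exactly one.
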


\begin{proof}
Since either $F\subset L$ or $F\cap L=\emptyset$, either $r(F\cap L)=r(F)$
and $r(F\cup L)=r(L)$, or $r(F\cap L)=0$ and $r(F\cup L)=r(L)+1$.
In either case, $r(F\cap L)+r(F\cup L)=r(F)+r(L)$ and $\left\{ F,L\right\} $
is a modular pair.
\end{proof}
\begin{defn}
Let $M$ be a matroid of rank $k\ge3$. If every pair of two corank-$1$
flats is a modular pair, we say that $M$ is \textbf{hypermodular}.
\end{defn}

For rank-$3$ matroids, the hypermodularity is equivalent to the modularity.
\begin{prop}
\label{prop:rk3-mod}Every rank-$3$ hypermodular matroid is modular.
\end{prop}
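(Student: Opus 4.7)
The plan is to verify directly, case by case on the rank of a flat $L$, that every flat $F$ of a rank-$3$ hypermodular matroid $M$ is modular. Since $\mathcal{L}(M) = \mathcal{L}^{(0)}(M) \cup \mathcal{L}^{(1)}(M) \cup \mathcal{L}^{(2)}(M) \cup \{S\}$, this reduces to checking only finitely many types of pairs.

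First I would dispose of the trivial ranks. If $F = \emptyset$ or $F = S$, then for every flat $L$ one has $F \cap L, F \cup L \in \{L, F, \emptyset, S\}$ in a way that makes $r(F \cap L) + r(F \cup L) = r(F) + r(L)$ immediate. Next, Proposition \ref{prop:mod-rk1} already handles every rank-$1$ flat. So the only remaining case is $F \in \mathcal{L}^{(2)}(M)$, and by the previous cases I only need to check pairs $\{F, L\}$ with $L \in \mathcal{L}^{(2)}(M)$.

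The key observation, and where hypermodularity enters, is that in a rank-$3$ matroid the corank-$1$ flats are exactly the rank-$2$ flats. Hence the hypermodularity hypothesis is precisely the statement that every pair of two distinct rank-$2$ flats is modular; and for $F = L$ the pair is trivially modular. This closes the last case and shows every flat of $M$ is modular, i.e.\ $M$ is modular.

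There is no real obstacle here: the argument is a bookkeeping exercise that exploits the coincidence ``corank $1$ = rank $2$'' available only in rank $3$. The only mild care needed is in the edge cases $F = \emptyset$ and $F = S$, which I would handle once and for all at the start rather than repeat inside the rank-$2$ analysis.
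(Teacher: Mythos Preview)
Your proposal is correct and follows essentially the same approach as the paper: use that in rank $3$ the corank-$1$ flats are exactly the rank-$2$ flats so hypermodularity handles all rank-$2$ pairs, and invoke Proposition~\ref{prop:mod-rk1} for the rank-$1$ case. The paper's proof is just a two-sentence compression of your argument, omitting the explicit treatment of the trivial ranks $0$ and $3$.
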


\begin{proof}
By the hypermodularity, every pair of two rank-$2$ flats is a modular
pair. Applying Proposition \ref{prop:mod-rk1} finishes the proof.
\end{proof}
\begin{defn}
A matroid $M$ is called \textbf{inseparable} or connected if it has
no nontrivial separator, and \textbf{separable} or disconnected otherwise.
A subset $A$ of the ground set $E(M)$ is called inseparable or separable
if $M|_{A}$ is.
\end{defn}

\begin{defn}
For any matroid $M$, we denote by $\kappa(M)$ the number of connected
components of a matroid $M$. Fix a matroid $M$. For a subset $F$
of $E(M)$, we say that $F$ is a \textbf{non-degenerate} subset if
$\kappa(M|_{F})+\kappa(M/F)=\kappa(M)+1$.
\end{defn}

\begin{prop}[{\cite[Lemma 4.21]{j-hope}}]
\label{prop:Shi19}  Let $M$ be a rank-$3$ inseparable matroid.
If $F$ and $L$ are two rank-$2$ flats with $F\cap L\neq\emptyset$
and $F\cup L=E(M)$, then $F\cap L$ is a degenerate flat, and vice
versa. These flats are unique.
\end{prop}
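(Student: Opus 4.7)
The plan is to handle the two implications and uniqueness in turn; I take $M$ to be simple throughout. For the forward direction, given rank-$2$ flats $F, L$ with $F\cap L\neq\emptyset$ and $F\cup L=E(M)$, submodularity \ref{enu:(R3)} gives $r(F\cap L)\le r(F)+r(L)-r(F\cup L)=2+2-3=1$, and since $F\cap L$ is a nonempty flat in a simple matroid, $r(F\cap L)=1$. Writing $P:=F\cap L$, the sets $F\setminus P$ and $L\setminus P$ are distinct, disjoint, rank-$1$ flats of the rank-$2$ matroid $M/P$ whose union is $E(M/P)$; hence each is a separator and $M/P$ decomposes as their direct sum. Thus $\kappa(M/P)=2$, and together with $\kappa(M|_{P})=1$ and $\kappa(M)=1$ from inseparability one gets $\kappa(M|_{P})+\kappa(M/P)=3\neq 2=\kappa(M)+1$, so $P$ is a degenerate flat.

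For the converse, let $P$ be a rank-$1$ degenerate flat. Non-degeneracy would require $\kappa(M|_{P})+\kappa(M/P)=\kappa(M)+1=2$, which combined with $\kappa(M|_{P})=1$ would give $\kappa(M/P)=1$; thus degeneracy forces $\kappa(M/P)\neq 1$, and the rank-$2$ constraint pins down $\kappa(M/P)=2$. Write $M/P=N_{1}\oplus N_{2}$ with both summands of rank $1$; pulling back $E(N_{1})$ and $E(N_{2})$ through the contraction map yields two rank-$2$ flats $F, L$ of $M$ with $F\cap L=P$ and $F\cup L=E(M)$. Uniqueness of the pair $\{F, L\}$ follows from the bijection between rank-$2$ flats of $M$ containing $P$ and rank-$1$ flats of $M/P$; the decomposition provides exactly two of the latter, namely $E(N_{1})$ and $E(N_{2})$.

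I do not foresee a substantive obstacle: the argument is essentially a bookkeeping exercise with submodularity and the flat-lattice interval $[P,E(M)]$. The one step that deserves care is the direct-sum decomposition of $M/P$, where the rank constraint $\mathrm{rank}(M/P)=2$ is what forces each connected component to be a single parallel class, hence exactly two rank-$2$ flats of $M$ through $P$.
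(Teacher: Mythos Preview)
The paper does not supply its own proof of this proposition; it is quoted from \cite[Lemma~4.21]{j-hope} and used as a black box. So there is nothing to compare your argument against, and the question is only whether your proof stands on its own.

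Your forward and converse directions are sound. Two small remarks. First, you assume $M$ is simple, but all you actually use is that $M$ is loopless, which already follows from inseparability in rank~$3$; the argument goes through unchanged for matroids with parallel elements. Second, and more substantively, your uniqueness argument is incomplete: you show that the rank-$2$ flats through a \emph{fixed} degenerate point $P$ are exactly $F$ and $L$, but you do not rule out a second degenerate rank-$1$ flat $P'\neq P$ giving rise to a different pair $\{F',L'\}$. One way to close this gap is to observe that any such $F'$ must contain $P$ (since $P$ is a rank-$1$ flat and $F'\cup L'=E(M)$ with $F',L'$ closed forces $P\subseteq F'$ or $P\subseteq L'$), hence $F'\in\{F,L\}$ by your own count of lines through $P$; then $L'\supseteq E(M)\setminus F'=L\setminus P$, and since $M$ inseparable forces $r(L\setminus P)=2$ (otherwise $E(M)$ would split as $F\sqcup Q$ for a rank-$1$ flat $Q$), one gets $L'=L$. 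With that patch the proof is complete.
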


\begin{cor}
\label{cor:Degen}The matroid of Proposition \ref{prop:Shi19} is
not hypermodular.
\end{cor}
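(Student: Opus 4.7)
My plan is to produce a pair of rank-$2$ flats of $M$ with strictly positive modular defect; since in rank $3$ the corank-$1$ flats are exactly the rank-$2$ flats, such a pair shows $M$ is not hypermodular. From Proposition~\ref{prop:Shi19} we have two rank-$2$ flats $F,L$ with $F\cup L=E(M)$ and $F\cap L$ degenerate. After passing to the simplification of $M$ (which preserves inseparability and the lattice of flats, hence the hypotheses and the property of being hypermodular), the intersection $F\cap L$ is a single point $p$; unpacking the definition of degenerate with $\kappa(M)=1$ shows $M/p$ is a disconnected rank-$2$ matroid, so it splits into two rank-$1$ components, i.e., $F$ and $L$ are the only rank-$2$ flats of $M$ through $p$.

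I would first establish $|F|\ge 3$ and $|L|\ge 3$. If, say, $F=\{p,a\}$, then $F\cup L=E(M)$ forces $E(M)\setminus\{a\}=L$, so $r(\{a\})+r(E(M)\setminus\{a\})=1+2=3=r(M)$, making $\{a\}$ a rank-$1$ separator and contradicting inseparability. With these bounds in hand, I would pick distinct $a_1,a_2\in F\setminus\{p\}$ and distinct $b_1,b_2\in L\setminus\{p\}$, and set $F_i:=\overline{\{a_i,b_i\}}$ for $i=1,2$. Each $F_i$ is a rank-$2$ flat since $a_i,b_i$ are non-parallel (if parallel, then $a_i\in\overline{b_i}\subseteq L$).

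The crucial verification is that $F_i=\{a_i,b_i\}$. Any further element $x\in F_i$ lies in $F\cup L$, and in each of the cases $x=p$, $x\in F\setminus\{p,a_i\}$, or $x\in L\setminus\{p,b_i\}$, collinearity with $a_i,b_i$ forces either $b_i\in F$ or $a_i\in L$, contradicting the choices. It follows that $F_1\cap F_2=\emptyset$ and $r(F_1\cup F_2)=3$ (the elements $a_1,a_2$ span $F$, and $b_1\notin F$), so the modular defect of $\{F_1,F_2\}$ is $2+2-3-0=1>0$, and $M$ is not hypermodular. The main hurdle is the derivation of $|F|,|L|\ge 3$ from inseparability and the case analysis ruling out extra elements of $F_i$; once $\{F_1,F_2\}$ is in hand, the rank computation is immediate.
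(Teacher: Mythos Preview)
Your proof is correct and follows essentially the same approach as the paper: both produce two disjoint rank-$2$ flats, each spanned by one element of $F\setminus L$ and one of $L\setminus F$, witnessing the failure of hypermodularity. Your version is simply more detailed---you pass to the simplification, justify $|F|,|L|\ge 3$ from inseparability, and carry out the case analysis showing $\overline{\{a_i,b_i\}}=\{a_i,b_i\}$---whereas the paper asserts directly that $\{f_1,l_1\}$ and $\{f_2,l_2\}$ are disjoint rank-$2$ flats. (Your remark that $F$ and $L$ are the only rank-$2$ flats through $p$, derived from degeneracy, is correct but not actually needed in the argument.)
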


\begin{proof}
There are rank-$2$ subsets $\left\{ f_{1},f_{2}\right\} \subset F-L$
and $\left\{ l_{1},l_{2}\right\} \subset L-F$. Then, $\left\{ f_{1},l_{1}\right\} $
and $\left\{ f_{2},l_{2}\right\} $ are rank-$2$ flats which have
empty intersection. Therefore, the matroid is not hypermodular.
\end{proof}

\section{\label{sec:Hypermodular}Hypermodular Matroids}

We introduce two interesting properties of hypermodular matroids.
\begin{prop}
\label{prop:inherit-HM}Let $M$ be a hypermodular matroid of rank
$\ge3$, and $A$ a flat of corank $\ge3$. Then, the contraction
$M/A$ is also a hypermodular matroid.
\end{prop}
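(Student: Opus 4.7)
The plan is to transfer modularity of pairs of corank-$1$ flats from $M$ to $M/A$ via the canonical rank-shifting bijection between flats of $M/A$ and flats of $M$ containing $A$. The corank-$3$ hypothesis on $A$ ensures that $M/A$ has rank $\ge 3$, so the hypermodularity of $M/A$ is a meaningful condition.

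First I would carry out the lifting step. If $F'$ is a corank-$1$ flat of $M/A$, let $F \supseteq A$ be its preimage in $M$, so that $F/A = F'$. The contraction rank identity $r_{M/A}(F/A) = r_M(F) - r_M(A)$ gives
\[
r_M(F) = r_M(A) + \bigl(r_M(E(M)) - r_M(A) - 1\bigr) = r_M(E(M)) - 1,
\]
so $F$ is a corank-$1$ flat of $M$. In particular, two distinct corank-$1$ flats $F_1', F_2'$ of $M/A$ lift to two distinct corank-$1$ flats $F_1, F_2$ of $M$, each containing $A$.

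Next, I would apply the hypermodularity of $M$ to the pair $\{F_1, F_2\}$, yielding
\[
r_M(F_1 \cap F_2) + r_M(E(M)) = r_M(F_1) + r_M(F_2),
\]
where I use that the flat closure of $F_1 \cup F_2$ must be $E(M)$, since it properly contains the corank-$1$ flat $F_1$. Both $F_1 \cap F_2$ and $E(M)$ contain $A$, so under the flat-lattice correspondence they are respectively the meet and join of $F_1', F_2'$ in $\mathcal{L}(M/A)$. Subtracting $2 r_M(A)$ from each side and applying the contraction rank identity termwise produces the modular equation for $\{F_1', F_2'\}$ in $M/A$.

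There is essentially no obstacle here: the argument is elementary bookkeeping with the rank formula for contraction. The only point requiring verification is that the lift of a corank-$1$ flat of $M/A$ is itself a corank-$1$ flat of $M$, which follows directly from the identity above. The corank-$3$ hypothesis on $A$ enters only to make sense of the word \emph{hypermodular} applied to $M/A$; the modular transfer argument itself works whenever $A$ is a proper flat.
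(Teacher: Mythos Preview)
Your proposal is correct and takes essentially the same approach as the paper: lift corank-$1$ flats of $M/A$ to corank-$1$ flats of $M$ containing $A$, apply hypermodularity of $M$, and use the contraction rank identity $r_{M/A}(X-A)=r_M(X)-r_M(A)$ termwise to descend. The paper presents this as a single three-line chain of equalities, but the content is identical.
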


\begin{proof}
Let $F$ and $L$ be any two distinct corank-$1$ flats of $M$ that
contain $A$, then $F-A$ and $L-A$ are two distinct corank-$1$
flats of $M/A$, and vice versa. The pair $\left\{ F-A,L-A\right\} $
is a modular pair of $M/A$ by the following:
\begin{align*}
r_{M/A}(F-A)+r_{M/A}(L-A) & =r(F)-r(A)+r(L)-r(A)\\
 & =r(F\cup L)-r(A)+r(F\cap L)-r(A)\\
 & =r_{M/A}(F\cup L-A)+r_{M/A}(F\cap L-A).
\end{align*}
Hence, $M/A$ is a hypermodular matroid.
\end{proof}
\begin{prop}
\label{prop:Properness-FUL}Let $M$ be a loopless  hypermodular matroid
of rank $\ge3$. For any two distinct corank-$1$ flats $F$ and $L$,
either their union $F\cup L$ is a proper subset of $E(M)$ or $F\cup L=E(M)$
and one of them is a union of two corank-$2$ flats where the intersection
of these two flats is nonempty if $r(M)\ge4$.
\end{prop}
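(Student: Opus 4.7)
The plan is to handle the case $F\cup L=E(M)$, since otherwise the first alternative of the proposition holds directly. Writing $r=r(M)$ and $A:=F\cap L$, hypermodularity of the pair $\{F,L\}$ gives
\[
r(A)=r(F)+r(L)-r(F\cup L)=2(r-1)-r=r-2,
\]
so $A$ is a corank-$2$ flat of $M$.

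Next, pick any rank-$(r-3)$ flat $B\subseteq A$ (with $B=\emptyset$ when $r=3$). By Proposition~\ref{prop:inherit-HM}, $M/B$ is a loopless rank-$3$ hypermodular matroid. The flats $F/B$ and $L/B$ are distinct rank-$2$ flats of $M/B$ whose union is $E(M/B)$ and whose intersection $A/B$ has rank $1$, hence is nonempty. Corollary~\ref{cor:Degen} then forces $M/B$ to be separable, so it decomposes into connected components as $N_1\oplus\cdots\oplus N_m$ with $m\ge 2$, each $N_i$ loopless, and $\sum r(N_i)=3$; the possible rank profiles are $(2,1)$ and $(1,1,1)$.

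Inspecting the flat lattice of such a direct sum, every rank-$2$ flat of $M/B$ except possibly $E(N_i)$ for a rank-$2$ connected summand $N_i$ has the form $X_1\cup X_2$, where $X_1,X_2$ are rank-$1$ flats of $M/B$ supported in distinct summands. Since $F/B\neq L/B$ and there is at most one rank-$2$ summand, at least one of them---say $L/B$---has this form. Lifting via the bijection $X\mapsto X\cup B$ between rank-$1$ flats of $M/B$ and corank-$2$ flats of $M$ containing $B$, the corank-$2$ flats $L_1:=X_1\cup B$ and $L_2:=X_2\cup B$ satisfy $L_1\cup L_2=L$. Because $X_1,X_2$ are disjoint (they lie in distinct summands of $M/B$), one has $L_1\cap L_2=B$, which has rank $r-3$ and is therefore nonempty precisely when $r\ge 4$.

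The hard part will be the case analysis for the $(2,1)$ profile: the rank-$2$ connected summand necessarily has at least three parallel classes in $M/B$, so $E(N_i)$ itself is not a union of two rank-$1$ flats of $M/B$. This is circumvented by the distinctness of $F/B$ and $L/B$, which guarantees at least one of them is of the decomposable form $X_1\cup X_2$.
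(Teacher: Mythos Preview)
Your proof is correct and follows essentially the same approach as the paper: both contract by a corank-$3$ flat inside $F\cap L$, invoke Proposition~\ref{prop:inherit-HM} and Corollary~\ref{cor:Degen} to force the resulting rank-$3$ hypermodular quotient to be separable, and then read off the required decomposition into two corank-$2$ flats from the direct-sum structure. Your version is somewhat more explicit (spelling out the $(2,1)$ and $(1,1,1)$ rank profiles and identifying the intersection as $B$ itself), while the paper argues the same points more tersely.
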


\begin{proof}
Let $A$ be a corank-$3$ flat that is contained in $F\cap L$. The
contraction $M/A$ is a loopless rank-$3$ hypermodular matroid by
Proposition \ref{prop:inherit-HM}. Further, $M/A$ is inseparable
if and only if $F\cup L\neq E(M)$ by Corollary \ref{cor:Degen}.
If $M/A$ is separable, $F-A$ and $L-A$ are rank-$2$ flats of $M/A$
whose union is $E(M)-A$. Moreover, one of them is separable and is
a union of two distinct rank-$1$ flats of $M/A$. So, one of $F$
and $L$ is a union of two distinct corank-$2$ flats of $M$. If
$r(M)\ge4$, then $r(A)\ge1$ and those corank-$2$ flats have a nonempty
intersection.
\end{proof}
Henceforth, we focus on rank-$4$ hypermodular matroids and investigate
those who are not modular.
\begin{prop}
\label{prop:equiv}Let $M$ be a loopless rank-$4$ hypermodular matroid.
There are no two disjoint flats of rank $3$ and $2$, respectively,
if and only if no rank-$3$ flat contains two disjoint rank-$2$ flats.
\end{prop}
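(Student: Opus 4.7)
The plan is to leverage the only nontrivial consequence of hypermodularity available in rank $4$: any two distinct rank-$3$ flats $F, F'$ satisfy $r(F \cap F') = 2$, since $r(F \cup F') = 4$ makes the modular equation read $3 + 3 = 4 + 2$. Both directions then follow by constructing an auxiliary rank-$3$ flat through a one-point extension of the smaller flat and invoking this intersection property.

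For the implication ($\Leftarrow$), I assume there exist disjoint flats $F$ and $G$ of ranks $3$ and $2$, choose any $y \in E(M) - G$ (which exists since $r(G) < r(M)$), and set $F' = \overline{G \cup \{y\}}$. Then $F'$ has rank $3$, and $F' \neq F$ because $G \subseteq F'$ is nonempty while $G \cap F = \emptyset$ forces $G \not\subseteq F$. Hypermodularity gives $r(F \cap F') = 2$, and $(F \cap F') \cap G \subseteq F \cap G = \emptyset$, so $F'$ is a rank-$3$ flat containing the two disjoint rank-$2$ subflats $G$ and $F \cap F'$.

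For the converse, suppose a rank-$3$ flat $F$ contains two disjoint rank-$2$ flats $G_1, G_2$. Picking any $x \in E(M) - F$ and setting $F_2 = \overline{G_2 \cup \{x\}}$ produces a rank-$3$ flat distinct from $F$, since $x \notin F \supseteq G_2$. Hypermodularity yields $r(F \cap F_2) = 2$; since $G_2 \subseteq F \cap F_2$ already has rank $2$, we must have $F \cap F_2 = G_2$. Therefore $G_1 \cap F_2 \subseteq G_1 \cap (F \cap F_2) = G_1 \cap G_2 = \emptyset$, exhibiting disjoint flats $G_1$ and $F_2$ of ranks $2$ and $3$.

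There is no serious obstacle here; the argument is symmetric in spirit across the two directions, and the only verifications needed are that the adjoined point raises rank by exactly one and produces a rank-$3$ flat different from the given $F$, both of which follow immediately from looplessness and the choice of point outside the relevant flat.
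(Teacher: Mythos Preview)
Your proof is correct and follows essentially the same approach as the paper: both directions proceed via the contrapositive, extend one of the rank-$2$ flats by a single element to obtain a new rank-$3$ flat, and then use hypermodularity to force the intersection with the original rank-$3$ flat to be a rank-$2$ flat disjoint from the other rank-$2$ flat. The only cosmetic difference is that in your $(\Leftarrow)$ direction you pick an arbitrary $y \in E(M) - G$, whereas the paper picks $f \in F$; both choices work for the same reason.
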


\begin{proof}
To prove the only if direction, let $T$ be a rank-$3$ flat containing
two disjoint rank-$2$ flats $L$ and $A$. For any $e\in E(M)-T\neq\emptyset$,
the flat $F=\overline{A\cup\left\{ e\right\} }$ is a rank-$3$ flat
whose intersection with $T$ is $A$ by the hypermodularity. Thus,
$F$ and $L$ are two disjoint flats of rank $3$ and $2$, respectively.

To prove the if direction, let $F$ and $L$ be two disjoint flats
of rank $3$ and $2$, respectively. For any $f\in F$, the flat $T=\overline{L\cup\left\{ f\right\} }$
is a rank-$3$ flat whose intersection with $F$, say $A=T\cap F$,
is a rank-$2$ flat by the hypermodularity, where these rank-$2$
flats $L$ and $A$ are disjoint and contained in a rank-$3$ flat
$T$.
\end{proof}
\begin{lem}
\label{lem:equiv}Let $M$ be a loopless rank-$4$ hypermodular matroid.
There is no pair of two disjoint flats of rank $3$ and $2$, respectively,
if and only if $M$ is modular.
\end{lem}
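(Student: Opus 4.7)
The plan is to handle the \emph{if} direction directly from modularity, and then, for the \emph{only if} direction, reduce the proof of modularity of $M$ to a small finite list of cases indexed by the ranks of the two flats involved.

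For the \emph{if} direction, suppose $M$ is modular and yet there exist disjoint flats $F$ of rank $3$ and $L$ of rank $2$. Since $M$ is loopless, $F\cap L=\emptyset$ has rank $0$, and modularity forces $r(F\cup L)=r(F)+r(L)=5$, contradicting $r(M)=4$. Hence no such pair can exist.

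For the \emph{only if} direction, assume there is no disjoint pair of a rank-$3$ and a rank-$2$ flat. I want to show that every pair $\{A,B\}$ of flats of $M$ is modular. The flats $\emptyset$ and $E(M)$ are trivially modular with every flat, and by Proposition \ref{prop:mod-rk1} every rank-$1$ flat is modular. This leaves pairs with $r(A),r(B)\in\{2,3\}$. The rank-$3$/rank-$3$ case is exactly the hypermodularity hypothesis. For a rank-$3$ flat $F$ and a rank-$2$ flat $L$: if $L\subseteq F$ the pair is immediately modular; otherwise, by assumption $L\cap F\ne\emptyset$, so $r(L\cap F)=1$, and $r(L\cup F)=3$ would force $L\cup F\subseteq F$, hence $L\subseteq F$, a contradiction. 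Thus $r(L\cup F)=4$, which together with $r(L\cap F)=1$ gives modularity.

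The crux is the rank-$2$/rank-$2$ case, where the ambient hypothesis does not directly apply. Here I invoke Proposition \ref{prop:equiv} to translate the assumption into the equivalent statement that no rank-$3$ flat of $M$ contains two disjoint rank-$2$ flats. Given distinct rank-$2$ flats $L_1,L_2$: if $L_1\cap L_2\ne\emptyset$, then $r(L_1\cap L_2)=1$ and submodularity (together with the fact that $r(L_1\cup L_2)=2$ would force $L_1=L_2$) yields $r(L_1\cup L_2)=3$; if $L_1\cap L_2=\emptyset$, then $r(L_1\cup L_2)\le 3$ would place $L_1$ and $L_2$ inside a common rank-$3$ flat, contradicting the translated hypothesis, so $r(L_1\cup L_2)=4$. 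Either way, the pair is modular. I expect the main conceptual obstacle to be exactly this disjoint rank-$2$/rank-$2$ case, and Proposition \ref{prop:equiv} is precisely the bridge designed to handle it.
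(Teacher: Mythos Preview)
Your proof is correct and follows essentially the same approach as the paper's: both handle the \emph{if} direction by a direct rank computation, and the \emph{only if} direction by the same case analysis on $\{r(A),r(B)\}$, invoking Proposition~\ref{prop:mod-rk1} for rank-$1$ flats, hypermodularity for rank-$3$/rank-$3$, the hypothesis for rank-$3$/rank-$2$, and Proposition~\ref{prop:equiv} for the disjoint rank-$2$/rank-$2$ case.
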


\begin{proof}
Suppose that $M$ is modular. For any two flats $F$ and $L$ we have
$r(F\cap L)=r(F)+r(L)-r(F\cup L)$. If $r(F)=3$ and $r(L)=2$, then
$r(F\cap L)=1$ and $F\cap L\neq\emptyset$.\smallskip{}

Conversely, suppose there are no two disjoint flats of rank $3$ and
$2$, respectively. We check all the cases and prove $M$ is a modular
matroid. By the hypermodularity, every pair of two rank-$3$ flats
is a modular pair.

Let $F$ and $L$ be flats of rank $3$ and $2$, respectively. By
assumption, $F\cap L\neq\emptyset$, and therefore either $L\subset F$
or $L\nsubseteq F$ with $r(F\cap L)=1$ where in the latter case,
$r(F\cap L)+r(F\cup L)=5=r(F)+r(L)$. Thus, $\left\{ F,L\right\} $
is a modular pair.

Let $L$ and $A$ be two distinct rank-$2$ flats with $L\cap A\neq\emptyset$,
then $r(L\cap A)=1$ and $r(L\cup A)\ge3$. By the submodularity,
$4\le r(L\cap A)+r(L\cup A)\le r(L)+r(A)=4$ and equality holds. Therefore,
$\left\{ L,A\right\} $ is a modular pair.

Let $L$ and $A$ be two disjoint rank-$2$ flats. Then, $\overline{L\cup A}$
is a flat of rank $>3$ by Proposition \ref{prop:equiv}. Therefore,
$\overline{L\cup A}=E(M)$, and $\left\{ L,A\right\} $ is a modular
pair.

Let $T$ be a rank-$1$ flat and $A$ an arbitrary flat, then $\left\{ T,A\right\} $
is a modular pair by Proposition \ref{prop:mod-rk1}.

Thus, $M$ is a modular matroid, and the proof is done.
\end{proof}
Rank-$4$ hypermodular but non-modular matroids have interesting properties.
We begin with the following lemma which plays a crucial role in the
later sections.
\begin{lem}
\label{lem:Disjoint0} Let $M$ be a loopless hypermodular but non-modular
matroid of rank $4$. Then, $M$ is inseparable. Let $F$ and $L$
be disjoint flats of rank $3$ and $2$, respectively, and $A_{1},\dots,A_{n}$
all those rank-$3$ flats containing $L$. Then, ${\textstyle E(M)=A_{1}\cup\cdots\cup A_{n}}$
with $n\ge3$ where all $A_{i}-L$ are pairwise disjoint and all $A_{i}-F\sqcup L$
are nonempty. Moreover, all $A_{i}$ and $F$ are inseparable.
\end{lem}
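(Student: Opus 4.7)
The plan is to unpack the six assertions in a natural order: first the inseparability of $M$, then the structural consequences of hypermodularity ($A_i\cap A_j=L$, $r(F\cap A_i)=2$, and $E(M)=\bigcup A_i$), then the inseparability of the restrictions $M|_F$ and $M|_{A_i}$, and finally the two claims $n\ge 3$ and $A_i-(F\cup L)\neq\emptyset$. I expect the last step to be where the main obstacle lies.

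For $M$ inseparable: a nontrivial direct-sum decomposition would have every summand of rank at most $3$; any rank-$3$ summand is hypermodular (inherited from $M$), hence modular by Proposition~\ref{prop:rk3-mod}, and rank-$\le 2$ summands are trivially modular; a direct sum of modular matroids is modular, contradicting the non-modularity of $M$. For the structural consequences, hypermodularity makes every pair of rank-$3$ flats modular, so for $i\ne j$ the equality $r(A_i\cap A_j)+r(A_i\cup A_j)=6$ together with $L\subseteq A_i\cap A_j$ (all of rank $2$) forces $A_i\cap A_j=L$, giving the pairwise disjointness of $A_i-L$. The same equality applied to $(F,A_i)$ — distinct since $L\subseteq A_i$ yet $L\cap F=\emptyset$ — gives $r(F\cap A_i)=2$, so each $F\cap A_i$ is a rank-$2$ flat of $M$ contained in $F$ and disjoint from $L$. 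The equality $E(M)=\bigcup_i A_i$ is then immediate: any $e\in E(M)-L$ lies in the rank-$3$ flat $\overline{L\cup\{e\}}$, which coincides with some $A_i$.

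For the inseparability of $F$ (and, by the same argument, each $A_i$), I rule out a direct-sum decomposition $M|_F=M^1\oplus M^2$. In a split of type $(1,2)$, the rank-$2$ flats of $M|_F$ are either $F^2$ (the rank-$2$ summand) or $F^1\cup X$ for some rank-$1$ flat $X\subseteq F^2$; any two of the latter share $F^1\ne\emptyset$, and $F^2$ meets each of them in $X\ne\emptyset$. So $M|_F$ cannot contain two pairwise disjoint rank-$2$ flats, contradicting the existence of $F\cap A_i$ and $F\cap A_j$ for $i\ne j$. The three-piece split $(1,1,1)$ has rank-$2$ flats $F^i\cup F^j$ for $i\ne j$, any two of which share a rank-$1$ summand, and is excluded identically. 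For each $A_i$, the same argument applies to the disjoint rank-$2$ flats $L$ and $F\cap A_i$ of $M|_{A_i}$.

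The main obstacle is $n\ge 3$. My plan is to argue by contradiction: if $n=2$, then $A_1\cup A_2=E(M)$, and Proposition~\ref{prop:Properness-FUL} forces one of them — say $A_1$ — to decompose as a set union $L_1\cup L_2$ of two distinct rank-$2$ flats whose intersection is a nonempty rank-$1$ pencil point. A short check rules out $L,\ F\cap A_1\in\{L_1,L_2\}$: e.g., $L=L_1$ would force $F\cap A_1\subseteq A_1-L\subseteq L_2$ and hence $F\cap A_1=L_2$, but then $\emptyset\neq L_1\cap L_2\subseteq L\cap(F\cap A_1)\subseteq L\cap F=\emptyset$. So $M|_{A_1}$ carries four distinct rank-$2$ flats $L,\ F\cap A_1,\ L_1,\ L_2$, with the first two disjoint and the last two meeting at a pencil point. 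The contradiction should come from a case analysis of how $L$ and $F\cap A_1$ sit relative to the pencil, together with hypermodularity applied to transversal rank-$3$ flats $\overline{\{a,b,l\}}$ for $a\in F\cap A_1$, $b\in F\cap A_2$, $l\in L$, which rigidifies the parallel-class structure of $M$ enough to force a separator — contradicting the inseparability of $M$. With $n\ge 3$ in hand, the non-emptiness claim follows similarly: if $A_1\subseteq F\cup L$, then $F\cap A_1=A_1-L$ and $F\cup A_2=E(M)$, so Proposition~\ref{prop:Properness-FUL} applied to $(F,A_2)$, combined with $n\ge 3$ and the established disjointness of $A_i-L$, produces a contradiction.
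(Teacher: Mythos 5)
Your first three blocks are sound and even a bit tidier than the paper's in places. The inseparability of $M$ via the observation that a rank-$3$ direct summand of a hypermodular matroid inherits hypermodularity (hence is modular by Proposition~\ref{prop:rk3-mod}) and that direct sums of modular matroids are modular is a clean, direct argument; the paper instead obtains inseparability of $M$ at the very end as a byproduct of the partition $E(M)=L\sqcup\bigsqcup_{i}T_{i}\sqcup\bigsqcup_{i}(A_{i}-F\sqcup L)$. Your derivations of $A_i\cap A_j=L$, $r(F\cap A_i)=2$, $E(M)=\bigcup_i A_i$, and the inseparability of $F$ and each $A_i$ via the absence of two disjoint rank-$2$ flats in a separable rank-$3$ matroid are all correct and track the paper's use of the partition $F=T_1\sqcup\cdots\sqcup T_n$ with $n\ge2$.

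The two remaining claims, however, are not proved. For $n\ge3$, you correctly invoke Proposition~\ref{prop:Properness-FUL} to write (say) $A_1=L_1\cup L_2$ with $L_1\cap L_2=\{c\}\neq\emptyset$, and your quick elimination of $L,F\cap A_1\in\{L_1,L_2\}$ is fine; but then the entire crux (``a case analysis of how $L$ and $F\cap A_1$ sit relative to the pencil \dots\ rigidifies the parallel-class structure of $M$ enough to force a separator'') is left as an intention, not an argument. Note in particular that when $c\notin L\cup F$ the decomposition $A_1=L_1\cup L_2$ only forces $L$ and $F\cap A_1$ to each split as a disjoint union of two rank-$1$ flats, which is no contradiction by itself; something genuinely has to be done here, and you have not done it. For the nonemptiness of every $A_i-(F\cup L)$, the intermediate assertion ``$A_1\subseteq F\cup L$ implies $F\cup A_2=E(M)$'' is unjustified and in general false once $n\ge3$: from $E(M)=\bigcup_i A_i$ and $A_1\subseteq F\cup L\subseteq F\cup A_2$ you only get $E(M)=F\cup A_2\cup A_3\cup\cdots\cup A_n$, and $A_3,\dots,A_n$ need not lie in $F\cup A_2$. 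The paper handles this step quite differently: assuming WLOG that $A_1,A_2$ satisfy the conclusion but $A_3=T_3\sqcup L$ does not, it picks $a\in A_2-(T_2\sqcup L)$ and shows via hypermodularity that the rank-$3$ flat $\overline{T_1\cup\{a\}}$ has empty intersection with both $F-T_1$ and $L$, hence with $A_3$, contradicting hypermodularity. (It also establishes, before $n\ge3$, that at least two of the sets $A_i-(F\sqcup L)$ are nonempty, by applying Proposition~\ref{prop:Properness-FUL} to $F\cup A_i=E(M)$ and arguing that neither $F$ nor $A_i$ can be a union of two intersecting rank-$2$ flats.) So the heart of the lemma --- exactly where you flagged the main obstacle --- is missing, and the final paragraph as written does not close the gap.
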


\begin{proof}
Note that for any $e\in E(M)-L$, the flat $\overline{L\cup\left\{ e\right\} }$
is a rank-$3$ flat, and every rank-$3$ flat containing $L$ arises
this way. Take any $f_{1}\in F$, then $A_{1}:=\overline{L\cup\left\{ f_{1}\right\} }$
is a rank-$3$ flat and $T_{1}:=A_{1}\cap F$ is a rank-$2$ flat
by the hypermodularity of $M$. For each $i=1,2,\dots$ we take $f_{i+1}\in F-T_{1}\cup\cdots\cup T_{i}$
unless $F=T_{1}\cup\cdots\cup T_{i}$. Then, $A_{i+1}:=\overline{L\cup\left\{ f_{i+1}\right\} }$
is a rank-$3$ flat and $T_{i+1}:=A_{i+1}\cap F$ is a rank-$2$ flat.
Since $F$ is a finite set, this process terminates and $F=T_{1}\cup\cdots\cup T_{n}$
for some $n\ge2$. For two distinct $i,j\in\left[n\right]$, we have
$\left(A_{i}-L\right)\cap\left(A_{j}-L\right)=\emptyset$, and $F=T_{1}\sqcup\cdots\sqcup T_{n}$.
So, $F$ is an inseparable flat. 

Take an arbitrary $e\in E(M)$. By the hypermodularity of $M$, the
rank-$3$ flat $\overline{L\cup\left\{ e\right\} }$ intersects $F$
and $T_{i}$ for some $i$. Then, for any $e_{i}\in\overline{L\cup\left\{ e\right\} }\cap T_{i}\neq\emptyset$,
both $\overline{L\cup\left\{ e\right\} }$ and $A_{i}$ are rank-$3$
flats containing the rank-$3$ subset $L\cup\left\{ e\right\} $,
and so they are the same flat, which means $e\in A_{i}$. Therefore,
$E(M)=A_{1}\cup\cdots\cup A_{n}$, and at least two of $A_{i}-F\sqcup L=A_{i}-T_{i}\sqcup L$
are nonempty since otherwise $E(M)=F\sqcup A_{i}$ for some $i\in\left[n\right]$,
but neither $F$ nor $A_{i}$ is a union of two rank-$2$ flats whose
intersection is nonempty, a contradiction to Proposition \ref{prop:Properness-FUL}.
Similarly, we have $n\ge3$.

Further, all $A_{i}-T_{i}\sqcup L$ are nonempty. To show this, suppose
that $A_{1}\neq T_{1}\sqcup L$, $A_{2}\neq T_{2}\sqcup L$ and $A_{3}=T_{3}\sqcup L$
without loss of generality. Take $a\in A_{2}-T_{2}\sqcup L\neq\emptyset$,
then $\overline{T_{1}\sqcup\left\{ a\right\} }$ is a rank-$3$ flat
such that $\overline{T_{1}\sqcup\left\{ a\right\} }\cap T_{3}\subseteq\overline{T_{1}\sqcup\left\{ a\right\} }\cap\left(F-T_{1}\right)=\emptyset$
and $\overline{T_{1}\sqcup\left\{ a\right\} }\cap L\subseteq\overline{T_{1}\sqcup\left\{ a\right\} }\cap\left(A_{1}-T_{1}\right)=\emptyset$,
and hence $\overline{T_{1}\sqcup\left\{ a\right\} }\cap A_{3}=\emptyset$,
which contradicts the hypermodularity of $M$. Thus, $A_{i}\neq T_{i}\sqcup L$
and $A_{i}-F\sqcup L\neq\emptyset$ for all $i\in\left[n\right]$,
and in particular, $A_{i}$ are inseparable flats.

Since $E(M)=L\sqcup\left(\bigsqcup_{i\in\left[n\right]}\left(A_{i}\cap F\right)\right)\sqcup\left(\bigsqcup_{i\in\left[n\right]}\left(A_{i}-F\sqcup L\right)\right)$
with $n\ge3$, there is no nontrivial separator of $M$, and $M$
is an inseparable matroid.
\end{proof}
\begin{prop}
\label{prop:Insep-rk3}Let $M$ be an inseparable rank-$4$ matroid
that is hypermodular. Then, every rank-$3$ flat is inseparable.
\end{prop}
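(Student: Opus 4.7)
The plan is to argue by contradiction: assume a rank-$3$ flat $F_{0}$ of $M$ is separable. Since $M$ is inseparable of rank $\geq 2$ it is loopless, and the separation of $M|_{F_{0}}$ together with the closedness of $F_{0}$ in $M$ gives a decomposition $F_{0}=T\sqcup L$ as a disjoint union of a rank-$1$ flat $T$ and a rank-$2$ flat $L$ of $M$. The hypermodularity of rank-$3$ pairs implies any two distinct rank-$3$ flats of $M$ intersect in a rank-$2$ flat, so no two rank-$3$ flats are disjoint; combined with inseparability this forces $r_{M}(E(M)-F_{0})\geq 2$, since otherwise $E(M)-F_{0}$ would sit inside a rank-$1$ flat that separates $M$.

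I then split by whether $M$ is modular. If $M$ is modular, then, being inseparable of rank $4$, by the classical representation of connected modular geometric lattices of rank $\geq 4$ as projective geometries over skew fields, every line of $M$ contains at least three rank-$1$ flats. But the rank-$2$ flats of $M|_{F_{0}}=M|_{T}\oplus M|_{L}$ of the form $T\cup P$, with $P$ a rank-$1$ flat of $L$, contain exactly the two rank-$1$ flats $T$ and $P$ --- a contradiction.

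If $M$ is not modular, Lemma~\ref{lem:equiv} supplies a disjoint pair $(\widetilde{F},\widetilde{L})$ of a rank-$3$ and a rank-$2$ flat, and Lemma~\ref{lem:Disjoint0} produces the partition $\widetilde{F}=T_{1}\sqcup\cdots\sqcup T_{n}$ with $n\geq 3$ and the rank-$3$ flats $A_{1},\ldots,A_{n}$ through $\widetilde{L}$, all of $\widetilde{F},A_{1},\ldots,A_{n}$ being inseparable. I compare $F_{0}$ with $\widetilde{L}$: if $\widetilde{L}\subseteq F_{0}$ then $F_{0}$ is some $A_{j}$, hence inseparable --- contradiction; if $F_{0}\cap\widetilde{L}=\emptyset$ then reapplying Lemma~\ref{lem:Disjoint0} to the disjoint pair $(F_{0},\widetilde{L})$ makes $F_{0}$ inseparable --- contradiction. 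The remaining case is $r(F_{0}\cap\widetilde{L})=1$; here hypermodularity of the pair $(F_{0},\widetilde{F})$ shows $F_{0}\cap\widetilde{F}$ is a rank-$2$ flat of $M$ contained in $\widetilde{F}$. If $F_{0}\cap T_{j}=\emptyset$ for some $j$, then the disjoint pair $(F_{0},T_{j})$ again forces $F_{0}$ inseparable via Lemma~\ref{lem:Disjoint0}. Otherwise $F_{0}\cap\widetilde{F}$ crosses every $T_{i}$ transversally in a single point, and I resolve this sub-case by varying the rank-$1$ flat inside $\widetilde{L}$ and applying a counting argument in the modular rank-$3$ contraction $M/T$ to extract a rank-$2$ flat of $M$ disjoint from $F_{0}$, concluding once more via Lemma~\ref{lem:Disjoint0}.

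The main obstacle is this final transversal sub-case: the intrinsic matroid structure of the inseparable rank-$3$ flat $\widetilde{F}$ cannot by itself exclude lines of $M$ transversal to its partition $T_{1}\sqcup\cdots\sqcup T_{n}$ (indeed such configurations are realizable already in rank-$3$ inseparable non-hypermodular matroids), and one must genuinely use the ambient rank-$4$ hypermodularity together with the flexibility of choosing the disjoint pair in order to force the existence of the required rank-$2$ flat disjoint from $F_{0}$.
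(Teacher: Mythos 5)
Your setup and three of your four sub-cases in the non-modular branch are fine (and the modular branch is correct, though invoking the Birkhoff/Veblen--Young classification of connected modular rank-$4$ geometric lattices is a very heavy tool for what amounts to a one-line modular-pair computation). The genuine gap is the final ``transversal'' sub-case, where $F_{0}\cap\widetilde{F}$ meets every $T_{i}$ in a single rank-$1$ flat: there you only assert that ``varying the rank-$1$ flat inside $\widetilde{L}$ and applying a counting argument in the modular rank-$3$ contraction $M/T$'' produces a rank-$2$ flat of $M$ disjoint from $F_{0}$, and you yourself flag this as the main obstacle. No such argument is carried out, and it is not at all clear how it would go: rank-$2$ flats of $M$ disjoint from $F_{0}$ do not contain $T$, so they are invisible in $M/T$, and under your contradiction hypothesis Lemma \ref{lem:Disjoint0} (applied to the separable $F_{0}$) says precisely that no line disjoint from $F_{0}$ can exist, so your sketch is trying to manufacture directly the one object whose absence is the only structural information you have. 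A proof must either complete this sub-case or find the contradiction elsewhere; as written, the proposal does not prove the proposition.

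For comparison, the paper's proof avoids your case analysis on the position of $F_{0}$ relative to a disjoint pair altogether, and uses the absence of disjoint lines positively. With $F_{0}=T\sqcup L$ ($T$ of rank $1$, $L$ of rank $2$): pick $a\in E(M)-F_{0}$, so $\overline{L\cup\{a\}}$ is a rank-$3$ flat; inseparability of $M$ gives some $b\in E(M)-\bigl(\overline{L\cup\{a\}}\cup T\bigr)$; submodularity applied to $L$ and $\overline{\{a,b\}}$ forces $L\cap\overline{\{a,b\}}=\emptyset$; then either modularity (modular case) or Lemma \ref{lem:Disjoint0} together with the separability of $F_{0}$ (non-modular case) forces $F_{0}\cap\overline{\{a,b\}}\neq\emptyset$, hence $T\subset\overline{\{a,b\}}$ and $\overline{\{a,b\}}=\overline{T\cup\{a\}}$ for every admissible $b$. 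This pins every element of $E(M)-F_{0}$ into the single line $\overline{T\cup\{a\}}$, giving $E(M)=L\sqcup\overline{T\cup\{a\}}$ and contradicting the inseparability of $M$. That short argument is exactly the content your transversal sub-case is missing, and it also disposes of the modular case without any classification theorem.
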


\begin{proof}
Let $F$ be a separable rank-$3$ flat, then $F=F_{1}\sqcup F_{2}$
with two flats $F_{1}$ and $F_{2}$ of rank $1$ and $2$, respectively.
Take any $a\in E(M)-F\neq\emptyset$, then $\overline{F_{2}\sqcup\left\{ a\right\} }$
is a rank-$3$ flat. Since $M$ is inseparable, there is $b\in E(M)-\overline{F_{2}\sqcup\left\{ a\right\} }\sqcup F_{1}\neq\emptyset$.
Then, by the submodularity of $M$, 
\begin{align*}
4=r(F_{2})+r(\overline{\{a,b\}}) & \ge r(F_{2}\cup\overline{\{a,b\}})+r(F_{2}\cap\overline{\{a,b\}})\\
 & =4+r(F_{2}\cap\overline{\{a,b\}}).
\end{align*}
 Thus, $r(F_{2}\cap\overline{\{a,b\}})=0$ and $F_{2}\cap\overline{\{a,b\}}=\emptyset$
since $M$ is loopless. If $M$ is modular, $F\cap\overline{\{a,b\}}\neq\emptyset$.
If $M$ is not modular, since $M$ is hypermodular and the rank-$3$
flat $F$ is separable, no rank-$2$ flat is disjoint from $F$ by
Lemmas \ref{lem:equiv} and \ref{lem:Disjoint0}, and so $F\cap\overline{\{a,b\}}\neq\emptyset$.
Then, $\emptyset\neq F\cap\overline{\{a,b\}}=F_{1}\cap\overline{\{a,b\}}=F_{1}$
and $\overline{\{a,b\}}=\overline{F_{1}\sqcup\{a\}}$. This implies
that $\overline{F_{1}\sqcup\{x\}}$ for all $x\in E(M)-F$ are the
same rank-$2$ flat. Then, we have $E(M)-F\subseteq\overline{F_{1}\sqcup\{a\}}$
and $E(M)=F_{2}\sqcup\overline{F_{1}\sqcup\{a\}}$, which contradicts
that $M$ is inseparable. Therefore, every rank-$3$ flat is inseparable.
\end{proof}

\section{\label{sec:Sticky}Extension of Rank-$4$ Hypermodular Matroids}

In this section, we give a criterion for single-element extension
of rank-$4$ hypermodular matroids that decreases the total sum of
modular defects, Theorem \ref{thm:criterion}. Prior to that, we need
Lemmas \ref{lem:Span-1} and \ref{lem:Span-2}.
\begin{notation}
Let $M$ be a matroid. For any flat $J$ and any subcollection $\mathcal{A}$
of the geometric lattice of $M$, we denote $J\vee\mathcal{A}=\left\{ \overline{J\cup A}:A\in\mathcal{A}\right\} $
and for $k\in\mathbb{Z}_{\ge0}$, 
\[
\left(J\vee\mathcal{A}\right)^{\left(k\right)}=\left\{ \overline{J\cup A}:A\in\mathcal{A},r(J\cup A)=k\right\} .
\]
\end{notation}

\begin{lem}
\label{lem:Span-1}Assume the setting of Lemma \ref{lem:Disjoint0}
with $T_{i}=A_{i}\cap F$ for all $i\in\left[n\right]$ and $T_{0}=L$,
and let $\mathcal{T}:=\left\{ T_{0},\dots,T_{n}\right\} $. Let $\mathcal{J}$
be the collection of the rank-$2$ flats $J\subset E(M)-F\sqcup L$
such that $\left(J\vee\mathcal{T}\right)^{\left(3\right)}$ has cardinality
at least $2$, and $\mathcal{J}_{+}:=\mathcal{J}\cup\mathcal{T}$.
Let $\mathcal{J}^{\sharp}$ be the collection of the rank-$3$ flats
containing some $T\in\mathcal{T}$. Then, 
\[
\mathcal{J}^{\sharp}\subseteq\left\{ \overline{J\cup J'}:J,J'\in\mathcal{J}_{+},J\neq J'\right\} .
\]
\end{lem}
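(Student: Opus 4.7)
My plan is to prove the inclusion by a case analysis on a rank-$3$ flat $B\in\mathcal{J}^{\sharp}$, according to how $B$ compares with the distinguished flats $F$ and $A_1,\dots,A_n$ from Lemma \ref{lem:Disjoint0}: the trivial cases are $B=F$ and $B=A_j$ for some $j\in[n]$, while the substantive case is when $B$ contains some $T\in\mathcal{T}$ but equals none of $F,A_1,\dots,A_n$. In every case I would produce two distinct elements of $\mathcal{J}_+$ whose join closes up to $B$.

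The trivial cases fall out immediately from Lemma \ref{lem:Disjoint0}. Since $F=T_1\sqcup\cdots\sqcup T_n$ with $n\ge 2$, any two distinct $T_i,T_j$ with $i,j\in[n]$ satisfy $r(T_i\cup T_j)=3$, so $F=\overline{T_i\cup T_j}$. For $B=A_j$, the union $T_0\cup T_j=L\sqcup T_j$ sits inside $A_j$ and has rank $3$ (using $L\cap F=\emptyset$ to force $L\cap T_j=\emptyset$), so $A_j=\overline{T_0\cup T_j}$. Both expressions use two distinct elements of $\mathcal{T}\subset\mathcal{J}_+$.

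For the substantive case I would first pin down which $T\in\mathcal{T}$ is contained in $B$: the containment $L\subseteq B$ would force $B$ to be one of the $A_j$, and the containment of two distinct $T_i,T_j$ with $i,j\in[n]$ would force $B\supseteq\overline{T_i\cup T_j}=F$ and hence $B=F$, so $B$ contains $T_i$ for exactly one $i\in[n]$. Hypermodularity applied to the distinct corank-$1$ flats $\{B,F\}$ gives $r(B\cap F)=2$ and so $B\cap F=T_i$, while a short argument shows $B\cap L=\emptyset$ (otherwise any $p\in B\cap L$ yields $T_i\cup\{p\}\subseteq B\cap A_i$ of rank $3$, forcing $B=A_i$). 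For any $j\in[n]-\{i\}$, hypermodularity on $\{B,A_j\}$ produces the rank-$2$ flat $J:=B\cap A_j$; the intersection computations $J\cap F=T_i\cap T_j=\emptyset$ and $J\cap L=\emptyset$ place $J\subseteq E(M)-F\sqcup L$, and the two rank-$3$ closures $\overline{J\cup T_i}=B$ and $\overline{J\cup T_j}=A_j$ are distinct, certifying $J\in\mathcal{J}$; then $B=\overline{T_i\cup J}$ exhibits $B$ in the desired form with $T_i\in\mathcal{T}$ and $J\in\mathcal{J}$ distinct. The main obstacle will be the bookkeeping in this last case: one must verify all of the intersection-and-rank identities and confirm that $J$ does not accidentally meet $F$ or $L$, but once the pairwise disjointness of $T_1,\dots,T_n$ inside $F$ and the identity $F\cap L=\emptyset$ from Lemma \ref{lem:Disjoint0} are on the table, every such check is automatic.
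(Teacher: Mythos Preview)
Your proposal is correct and follows essentially the same approach as the paper: both handle the easy cases $B\in\{F,A_1,\dots,A_n\}$ directly using elements of $\mathcal{T}$, and in the remaining case both intersect $B$ with some $A_j$ (for $j\neq i$) to produce the required $J\in\mathcal{J}$. Your write-up is actually more careful than the paper's, which asserts $X\cap A_j\in\mathcal{J}$ without explicitly checking the conditions $J\subseteq E(M)-F\sqcup L$ and $|(J\vee\mathcal{T})^{(3)}|\ge 2$; your verification of $J\cap F=T_i\cap T_j=\emptyset$, $J\cap L=\emptyset$, and the two distinct rank-$3$ closures $\overline{J\cup T_i}=B$ and $\overline{J\cup T_j}=A_j$ fills this in.
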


\begin{proof}
Let $X\in\mathcal{J}^{\sharp}$, then $X$ contains some $T\in\mathcal{T}$.
If $T=L$, then $X=A_{i}$ for some $i\in\left[n\right]$ by Lemma
\ref{lem:Disjoint0} where $A_{i}=\overline{T_{i}\cup L}$. Then,
since $F=\overline{T_{1}\cup T_{2}}$, we may assume $X\notin\left\{ F,A_{1},\dots,A_{n}\right\} $.
Else if $T\neq L$, then $T=T_{i}$ for some $i\in\left[n\right]$,
and $X\cap A_{j}\neq T_{i}$ for any $j\neq i$ is a rank-$2$ flat
by the hypermodularity of $M$. So, $X\cap A_{j}\in\mathcal{J}$ and
$X=\overline{T\cup\left(X\cap A_{j}\right)}$. Thus, $\mathcal{J}^{\sharp}\subseteq\left\{ \overline{J\cup J'}:J,J'\in\mathcal{J}_{+},J\neq J'\right\} $.
\end{proof}
\begin{lem}
\label{lem:Span-2}Assume the setting of Lemma \ref{lem:Span-1}.
Suppose that 
\[
\mathcal{J}^{\sharp}\supseteq\left\{ \overline{J\cup J'}:J,J'\in\mathcal{J}_{+},J\neq J'\right\} .
\]
\begin{enumerate}[leftmargin=3em,itemsep=2pt]
\item \label{enu:J-sharp}Then, $\mathcal{J}^{\sharp}=\left\{ \overline{J\cup J'}:J,J'\in\mathcal{J}_{+},J\neq J'\right\} $.
\item \label{enu:J-disjoint}Every two distinct elements of $\mathcal{J}_{+}$
are disjoint.
\item \label{enu:J-partition}$\mathcal{J}_{+}$ is a partition of $E(M)$.
\item \label{enu:J-UB}For any $X\in\mathcal{L}^{\left(2\right)}(M)-\mathcal{J}_{+}$,
the cardinality of $\left(X\vee\mathcal{T}\right)^{\left(3\right)}$
is $1$.
\item \label{enu:J-LB}For any $X\in\mathcal{J}^{\sharp}$ and $J\in\mathcal{J}_{+}$,
either $J\subset X$ or $J\cap X=\emptyset$.
\end{enumerate}
\end{lem}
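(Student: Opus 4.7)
The plan is to derive all five parts from a single strengthened consequence of the hypothesis: combined with Lemma~\ref{lem:Span-1}, it says that for every two distinct $J,J'\in\mathcal{J}_{+}$, the closure $\overline{J\cup J'}$ is a rank-$3$ flat in $\mathcal{J}^{\sharp}$; in particular $r(J\cup J')=3$ and $\overline{J\cup J'}\supseteq T_{\ell}$ for some $\ell\in\{0,\dots,n\}$. This puts a projective-plane-like incidence structure on $(\mathcal{J}_{+},\mathcal{J}^{\sharp})$, and parts (2)--(5) amount to verifying its axioms inside $M$.

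Part (1) is immediate: Lemma~\ref{lem:Span-1} gives one inclusion and the hypothesis gives the other. For (2), I split on membership in $\mathcal{T}$. If both $J,J'\in\mathcal{T}$, then $T_{0}=L$ and $T_{1},\dots,T_{n}\subseteq F$ are pairwise disjoint by Lemma~\ref{lem:Disjoint0}. If exactly one of $J,J'$ lies in $\mathcal{T}$, disjointness is automatic from $\mathcal{J}\subseteq 2^{E(M)-F\sqcup L}$ and $\mathcal{T}\subseteq 2^{F\sqcup L}$. The delicate case is $J,J'\in\mathcal{J}$. Assuming $r(J\cap J')=1$ for contradiction, $X:=\overline{J\cup J'}$ is a rank-$3$ flat containing some $T_{\ell}$, with $T_{\ell}\cap J=T_{\ell}\cap J'=\emptyset$. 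In the inseparable flat $X$ (Proposition~\ref{prop:Insep-rk3}), the three rank-$2$ flats $J,J',T_{\ell}$ have a prescribed intersection pattern; for every $T_{k}\in\mathcal{T}$ with $k\neq\ell$, the hypothesis forces both $\overline{J\cup T_{k}}$ and $\overline{J'\cup T_{k}}$ to be rank-$3$ flats in $\mathcal{J}^{\sharp}$, and the resulting two pencils of rank-$3$ flats through $J$ and through $J'$, compared via the rigid partition $F=T_{1}\sqcup\cdots\sqcup T_{n}$ with $n\geq 3$, collapse to force $J\subseteq F$, contradicting $J\in\mathcal{J}$.

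For (3), pairwise disjointness from (2) reduces the claim to surjectivity. Elements of $F\sqcup L$ are covered by $\mathcal{T}$ via Lemma~\ref{lem:Disjoint0}. For $e\in E(M)-F\sqcup L$, by Lemma~\ref{lem:Disjoint0} there is a unique $j$ with $e\in A_{j}-F\sqcup L$ and some $e'\in A_{k}-F\sqcup L$ with $k\neq j$; then $J:=\overline{\{e,e'\}}$ is a rank-$2$ flat. If $J$ met $L$ (resp.\ $F$), the hypermodular closure computations force $J\subseteq A_{j}\cap A_{k}=L$ (resp.\ $J\subseteq F$), contradicting $e,e'\notin L\cup F$. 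The flats $\overline{J\cup T_{j}}$ and $\overline{J\cup T_{k}}$ are distinct rank-$3$ flats (if they coincided, they would contain $\overline{T_{j}\cup T_{k}}=F$, forcing $J\subseteq F$), so $J\in\mathcal{J}$. For (4), stratify $X\in\mathcal{L}^{(2)}(M)-\mathcal{J}_{+}$ by its location. If $X\subseteq F$ with $X\neq T_{i}$, inseparability of $F$ makes $X$ meet every $T_{i}$ in rank $1$, so $\overline{X\cup T_{i}}=F$ for every $i\in[n]$ while $\overline{X\cup L}$ has rank $4$, yielding $(X\vee\mathcal{T})^{(3)}=\{F\}$. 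The parallel case $X\subseteq A_{j}$ meeting $L$ or $T_{j}$ in rank $1$ gives $(X\vee\mathcal{T})^{(3)}=\{A_{j}\}$. The remaining case $X\subseteq E(M)-F\sqcup L$ with $|(X\vee\mathcal{T})^{(3)}|\leq 1$ has the same singleton conclusion after verifying existence of at least one rank-$3$ closure via hypermodularity.

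For (5), if $J\in\mathcal{T}$ the conclusion follows from the case analysis of $X$ already carried out for (4): when $X\supseteq T_{\ell}$ with $X\neq F,A_{\ell}$ one shows $X\cap F=T_{\ell}$ and $L\cap X=\emptyset$, while the cases $X=F$ and $X=A_{j}$ handle themselves directly. For $J\in\mathcal{J}$ the plan is to show that $X$ is the disjoint union of those $J'\in\mathcal{J}_{+}$ it contains; then the partition (3) forces the unique $J'\in\mathcal{J}_{+}$ through any $e\in J\cap X$ to be both $J$ and a subset of $X$, contradicting $r(J\cap X)=1$. The covering $X=\bigsqcup_{J'\in\mathcal{J}_{+},J'\subseteq X}J'$ is established from (1) by writing $X=\overline{P\cup Q}$ with $P,Q\in\mathcal{J}_{+}$ and iterating: any element $e\in X-P-Q-T_{\ell}$ would span with $P$ (or $Q$) a new rank-$2$ flat in $\mathcal{J}_{+}$ contained in $X$ via the hypothesis. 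The main obstacle I expect is exactly this covering step inside $X$, together with the hard subcase of (2), both of which require careful bookkeeping of pencils of rank-$3$ flats across the partition $F=T_{1}\sqcup\cdots\sqcup T_{n}$.
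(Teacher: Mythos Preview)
Your overall plan is sound, but the execution diverges sharply from the paper in part~(2), and that is also where you have a genuine gap. You set up a contradiction from $r(J\cap J')=1$ and then assert that ``the resulting two pencils of rank-$3$ flats through $J$ and through $J'$ \dots\ collapse to force $J\subseteq F$''. This step is not an argument; nothing in the preceding sentences explains why the pencils must collapse, and I do not see how to complete it along those lines. The paper's proof of (2) is short and direct, and you should replace your approach by it: for $J,J'\in\mathcal{J}$, the hypothesis gives $\overline{J\cup L}\in\mathcal{J}^{\sharp}$, so it is a rank-$3$ flat containing $L$ and hence equals some $A_i$ by Lemma~\ref{lem:Disjoint0}; likewise $\overline{J'\cup L}=A_j$. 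If $i\neq j$ then $J\cap J'\subseteq A_i\cap A_j=L$, while $J,J'\subset E(M)-F\sqcup L$, so $J\cap J'=\emptyset$. If $i=j$, choose $k\neq i$; then $\overline{T_k\cup J}$ and $\overline{T_k\cup J'}$ are distinct rank-$3$ flats (equality would force the rank-$2$ flat $\overline{T_k\cup J}\cap A_i$ to contain $J\cup J'$), so by hypermodularity their intersection is $T_k$, giving $J\cap J'\subseteq T_k\subset F$ and again $J\cap J'=\emptyset$.

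Once (2) is in hand, parts (3)--(5) become much shorter than what you propose, and your longer routes have loose ends. In (3), your claim that $J\cap F\neq\emptyset$ forces $J\subseteq F$ is not justified (your parallel argument for $L$ works because $e,e'$ both lie in flats containing $L$, but there is no such symmetry with $F$). The paper instead defines $J:=\overline{T_1\cup\{x\}}\cap\overline{T_2\cup\{x\}}$ directly; this is a rank-$2$ flat through $x$ with $\overline{J\cup T_1}$ and $\overline{J\cup T_2}$ distinct rank-$3$ flats, so $J\in\mathcal{J}$. In (4), your case split on the location of $X$ is not exhaustive (a rank-$2$ flat need not lie entirely in $F$, in some $A_j$, or in $E(M)-F\sqcup L$). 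The paper avoids cases: by (3), pick rank-$1$ flats $Y_1,Y_2\subset X$ lying in distinct $J_1,J_2\in\mathcal{J}_+$, so $X\subset\overline{J_1\cup J_2}\in\mathcal{J}^{\sharp}$ by (1), giving one $T\in\mathcal{T}$ with $\overline{X\cup T}$ of rank $3$; the upper bound $|(X\vee\mathcal{T})^{(3)}|\le 1$ is immediate from $X\notin\mathcal{J}_+$. For (5), your iteration is unnecessary: by (3) the partition $\mathcal{J}_+$ covers $X$, and for any $J\in\mathcal{J}_+$ meeting $X$ with $J\neq T$ (where $T\in\mathcal{T}$, $T\subset X$), disjointness (2) gives a point $e\in J\cap X$ with $e\notin T$, whence $\overline{T\cup\{e\}}=X$ equals $\overline{T\cup J}$ and $J\subset X$.
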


\begin{proof}
(1) Lemma \ref{lem:Span-1} tells that $\mathcal{J}^{\sharp}=\left\{ \overline{J\cup J'}:J,J'\in\mathcal{J}_{+},J\neq J'\right\} $.

(2) Let $J$ and $J'$ be two distinct elements of $\mathcal{J}$.
By (\ref{enu:J-sharp}) and Lemma \ref{lem:Disjoint0}, we have $\overline{J\cup L}=A_{i}$
and $\overline{J'\cup L}=A_{j}$ for some $i,j\in\left[n\right]$.
If $i\neq j$, then $J\cap J'\subseteq A_{i}\cap A_{j}=L$. If $i=j$,
take $k\in\left[n\right]-\left\{ i\right\} $, then $J\cap J'\subseteq\left(\overline{T_{k}\cup J}\right)\cap\left(\overline{T_{k}\cup J'}\right)=T_{k}$.
In either case, $J\cap J'=\emptyset$. Therefore, any two distinct
elements of $\mathcal{J}_{+}$ are disjoint by Lemma \ref{lem:Disjoint0}.

(3) Take any $x\in E(M)-F\sqcup L$. Then, $J=\overline{T_{1}\cup\left\{ x\right\} }\cap\overline{T_{2}\cup\left\{ x\right\} }$
is a rank-$2$ flat containing $x$ where $J\in\mathcal{J}$. Therefore,
$\mathcal{J}_{+}$ is a partition of $E(M)$ by (\ref{enu:J-disjoint}).

(4) Take any rank-$2$ flat $X\in\mathcal{L}^{\left(2\right)}(M)-\mathcal{J}_{+}$,
then $\left|\left(X\vee\mathcal{T}\right)^{\left(3\right)}\right|\le1$.
Moreover, $X$ is written as the disjoint union of rank-$1$ flats
by (\ref{enu:J-partition}), say $X=Y_{1}\sqcup\cdots\sqcup Y_{\lambda}=\overline{Y_{1}\cup Y_{2}}$.
Take $J_{1},J_{2}\in\mathcal{J}_{+}$ such that $Y_{1}\subset J_{1}$
and $Y_{2}\subset J_{2}$. By (\ref{enu:J-sharp}), $X\subset\overline{J_{1}\cup J_{2}}\in\mathcal{J}^{\sharp}$
and $\overline{J_{1}\cup J_{2}}$ contains some $T\in\mathcal{T}$.
So, $\overline{J_{1}\cup J_{2}}=\overline{X\cup T}$ and $\left|\left(X\vee\mathcal{T}\right)^{\left(3\right)}\right|\ge1$.

(5) By (\ref{enu:J-sharp}) and (\ref{enu:J-partition}), every $X\in\mathcal{J}^{\sharp}$
is a disjoint union of elements of $\mathcal{J}_{+}$, which implies
that for any $J\in\mathcal{J}_{+}$, either $J\subset X$ or $J\cap X=\emptyset$.
\end{proof}
\begin{notation}
For a graded poset $\mathcal{F}$, we denote by $\mathcal{F}^{\left(r\right)}$
the $r$-th graded piece of $\mathcal{F}$, that is, the collection
of rank-$r$ elements of $\mathcal{F}$.
\end{notation}

\pagebreak{}
\begin{thm}
\label{thm:criterion}Assume the setting of Lemma \ref{lem:Span-1}.
The following are equivalent.
\begin{enumerate}[leftmargin=2.6em,itemsep=2pt]
\item \label{enu:Ext1}$\mathcal{J}^{\sharp}\supseteq\left\{ \overline{J\cup J'}:J,J'\in\mathcal{J}_{+},J\neq J'\right\} .$
\item \label{enu:Ext2}There is a rank-$4$ hypermodular matroid $\tilde{M}$
on $E(M)\cup\left\{ \tilde{m}\right\} $ with $M=\tilde{M}|_{E(M)}$
whose total sum of modular defects is less than that of $M$ and:
\begin{enumerate}[leftmargin=1.8em]
\item $\mathcal{L}^{\left(4\right)}(\tilde{M})=E(M)\cup\left\{ \tilde{m}\right\} $
and $\mathcal{L}^{\left(0\right)}(\tilde{M})=\left\{ \emptyset\right\} $,
\item $\mathcal{L}^{\left(3\right)}(\tilde{M})=\left(\mathcal{L}^{\left(3\right)}(M)-\mathcal{J}^{\sharp}\right)\cup\left\{ J\cup\left\{ \tilde{m}\right\} :J\in\mathcal{J}^{\sharp}\right\} $,
\item $\mathcal{L}^{\left(2\right)}(\tilde{M})=\left(\mathcal{L}^{\left(2\right)}(M)-\mathcal{J}_{+}\right)\cup\left\{ J\cup\left\{ \tilde{m}\right\} :J\in\mathcal{J}_{+}\right\} $,
\item $\mathcal{L}^{\left(1\right)}(\tilde{M})=\mathcal{L}^{\left(1\right)}(M)\cup\left\{ \tilde{m}\right\} $.
\end{enumerate}
\end{enumerate}
\end{thm}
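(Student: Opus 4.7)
The plan is to prove the two implications separately, using Lemma \ref{lem:Span-2} as the structural backbone. The direction (\ref{enu:Ext2})$\Rightarrow$(\ref{enu:Ext1}) is essentially immediate: given $\tilde{M}$ with the prescribed lattice, for any two distinct $J,J'\in\mathcal{J}_{+}$, the rank-$2$ flats $J\cup\{\tilde{m}\}$ and $J'\cup\{\tilde{m}\}$ of $\tilde{M}$ meet exactly in the rank-$1$ flat $\{\tilde{m}\}$, since by Lemma \ref{lem:Span-2}(\ref{enu:J-disjoint}) we have $J\cap J'=\emptyset$. Their join in $\tilde{M}$ therefore has rank $3$ and contains $\tilde{m}$, so by the prescribed form of $\mathcal{L}^{(3)}(\tilde{M})$ it must equal $X\cup\{\tilde{m}\}$ for some $X\in\mathcal{J}^{\sharp}$. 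Restricting to $E(M)$ gives $X=\overline{J\cup J'}$, which is exactly the condition (\ref{enu:Ext1}).

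For (\ref{enu:Ext1})$\Rightarrow$(\ref{enu:Ext2}), I would declare the collection displayed in (a)--(d) to be $\mathcal{L}(\tilde{M})$ and verify the flat axioms \ref{enu:(F1)} and \ref{enu:(F2)}, hypermodularity, and the strict decrease in total modular defect. The key preliminary observation, deducible from Lemma \ref{lem:Span-2}(\ref{enu:J-sharp}) and (\ref{enu:J-partition}), is: \emph{every rank-$3$ flat of $M$ that contains some $J\in\mathcal{J}_{+}$ lies in $\mathcal{J}^{\sharp}$}. Indeed, picking $y$ in such a flat $X$ but outside $J$ and letting $J'\in\mathcal{J}_{+}$ be the unique partition element containing $y$, the rank-$3$ flat $\overline{J\cup\{y\}}\subseteq X$ is sandwiched in the rank-$3$ flat $\overline{J\cup J'}\in\mathcal{J}^{\sharp}$, forcing $X=\overline{J\cup J'}$. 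The analogous statement for rank-$2$ flats outside $\mathcal{J}_{+}$ is recorded in Lemma \ref{lem:Span-2}(\ref{enu:J-UB}).

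With this observation in hand, \ref{enu:(F1)} is checked by a case split on which of the two intersecting flats contain $\tilde{m}$. The only potential obstruction---an intersection of non-promoted flats of $M$ accidentally landing inside $\mathcal{J}_{+}\cup\mathcal{J}^{\sharp}$---is excluded by the observation: e.g.\ if $F\cap L\in\mathcal{J}_{+}$ for two non-promoted rank-$3$ flats $F,L$, then $F$ would contain an element of $\mathcal{J}_{+}$, contradicting $F\notin\mathcal{J}^{\sharp}$. For two ``new'' flats one uses $(J\cup\{\tilde{m}\})\cap(J'\cup\{\tilde{m}\})=(J\cap J')\cup\{\tilde{m}\}$ together with parts (\ref{enu:J-disjoint}) and (\ref{enu:J-LB}) of Lemma \ref{lem:Span-2}. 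Axiom \ref{enu:(F2)} then reduces to tracking covers through the promotion: covers of $\tilde{M}$ are exactly the covers of $M$ between non-promoted flats, the covers of $M$ between pairs of promoted flats (now with $\tilde{m}$ absorbed on both sides), and the new covers $J\subset J\cup\{\tilde{m}\}$. Hypermodularity of $\tilde{M}$ is checked by the same kind of case analysis on pairs of rank-$3$ flats; in the new-new case, Lemma \ref{lem:Span-2}(\ref{enu:J-LB}) combined with the partition property forces $J\cap J'\in\mathcal{J}_{+}$.

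Finally, the strict decrease in total modular defect comes from a pair-by-pair comparison. The principal sources of defect in $M$---disjoint pairs $J,J'\in\mathcal{J}_{+}$, the disjoint pair $(F,L)$ of Lemma \ref{lem:Disjoint0}, and more generally pairs of a rank-$3$ flat in $\mathcal{J}^{\sharp}$ and a disjoint rank-$2$ flat in $\mathcal{J}_{+}$---each have both members promoted in $\tilde{M}$, so their intersection in $\tilde{M}$ contains $\tilde{m}$ and has rank at least $1$, strictly reducing the defect. Pairs involving at most one promoted flat retain their old defect, and no pair's defect increases because the rank function on subsets of $E(M)$ is preserved. I expect the main obstacle to be the systematic case-analysis verification of axiom \ref{enu:(F1)} in the mixed old/new setting, where one must carefully use Lemma \ref{lem:Span-2} to rule out subtle crossings of the promotion stratification; the modular-defect bookkeeping is routine once $\tilde{M}$ is known to be a well-defined hypermodular matroid.
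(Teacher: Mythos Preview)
Your approach matches the paper's: for (\ref{enu:Ext1})$\Rightarrow$(\ref{enu:Ext2}) both of you declare the collection in (a)--(d) and verify the flat axioms via Lemma~\ref{lem:Span-2}, and for (\ref{enu:Ext2})$\Rightarrow$(\ref{enu:Ext1}) both read the conclusion off from the lattice of $\tilde{M}$ (the paper phrases this via the contraction $\tilde{M}/\{\tilde m\}$, while you compute the join directly---your route is in fact slightly slicker). One caveat: your invocation of Lemma~\ref{lem:Span-2}(\ref{enu:J-disjoint}) in the (\ref{enu:Ext2})$\Rightarrow$(\ref{enu:Ext1}) direction is circular, since that lemma has (\ref{enu:Ext1}) as its hypothesis; but the step is unnecessary anyway, because the meet of $J\cup\{\tilde m\}$ and $J'\cup\{\tilde m\}$ in $\tilde{M}$ already contains $\tilde m$ and hence has rank at least $1$, which is all submodularity needs to force the join to have rank $3$.
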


\begin{proof}
We may assume that $E(M)=\left[m\right]$ without loss of generality.
\smallskip{}

Suppose (\ref{enu:Ext2}). Observe that $\mathcal{J}^{\sharp}=\mathcal{L}^{\left(2\right)}(\tilde{M}/\left\{ \tilde{m}\right\} )$
and $\mathcal{J}_{+}=\mathcal{L}^{\left(1\right)}(\tilde{M}/\left\{ \tilde{m}\right\} )$
where $\tilde{M}/\left\{ \tilde{m}\right\} $ is a rank-$3$ modular
matroid by Propositions \ref{prop:inherit-HM} and \ref{prop:rk3-mod}.
Let $J$ and $J'$ be any two distinct elements of $\mathcal{J}_{+}$.
Then, $X=\overline{J\cup J'}$ is a rank-$2$ flat of $\tilde{M}/\left\{ \tilde{m}\right\} $,
and $X\cup\left\{ \tilde{m}\right\} $ is a rank-$3$ flat of $\tilde{M}$.
Therefore, $X$ is a rank-$3$ flat of $M$. If $X=F$, then $X\in\mathcal{J}^{\sharp}$.
If $X\neq F$, let $Y=X\cap F$, then $Y$ is a rank-$1$ flat of
$\tilde{M}/\left\{ \tilde{m}\right\} $. Since $\mathcal{T}$ is the
collection of all rank-$1$ flats of $\tilde{M}/\left\{ \tilde{m}\right\} $
that are contained in $F$, cf. Lemma \ref{lem:Disjoint0}, we have
$Y\in\mathcal{T}$ and hence $X\in\mathcal{J}^{\sharp}$. Therefore,
$\left\{ \overline{J\cup J'}:J,J'\in\mathcal{J}_{+},J\neq J'\right\} \subseteq\mathcal{J}^{\sharp}$.\smallskip{}

Conversely, suppose (\ref{enu:Ext1}). Let $\tilde{m}=m+1$. We define
a rank-$4$ graded poset $\mathcal{F}$ of subsets of $\left[\tilde{m}\right]$
ordered by set inclusion, and then show it is a geometric lattice.
By Lemma \ref{lem:Span-2}(\ref{enu:J-sharp}), $\mathcal{J}^{\sharp}=\left\{ \overline{J\cup J'}:J,J'\in\mathcal{J}_{+},J\neq J'\right\} $.
Note that every element of $\mathcal{J}_{+}$ is an intersection of
two elements of $\mathcal{J}^{\sharp}$. Define a map $\epsilon:\mathcal{L}(M)\rightarrow2^{\left[\tilde{m}\right]}$
such that: 
\[
\epsilon(X)=\begin{cases}
X\cup\left\{ \tilde{m}\right\}  & \text{if }X\in\mathcal{J}^{\sharp}\cup\mathcal{J}_{+},\\
X & \text{otherwise}.
\end{cases}
\]
 Let $\mathcal{F}:=\epsilon(\mathcal{L}(M))\cup\left\{ \tilde{m}\right\} \subset2^{\left[\tilde{m}\right]}$
and define a grading on it as follows:
\begin{itemize}
\item $\mathcal{F}^{\left(4\right)}=E(M)\cup\left\{ \tilde{m}\right\} $
and $\mathcal{F}^{\left(0\right)}=\left\{ \emptyset\right\} $,
\item $\mathcal{F}^{\left(3\right)}=\left(\mathcal{L}^{\left(3\right)}(M)-\mathcal{J}^{\sharp}\right)\cup\left\{ J\cup\left\{ \tilde{m}\right\} :J\in\mathcal{J}^{\sharp}\right\} $,
\item $\mathcal{F}^{\left(2\right)}=\left(\mathcal{L}^{\left(2\right)}(M)-\mathcal{J}_{+}\right)\cup\left\{ J\cup\left\{ \tilde{m}\right\} :J\in\mathcal{J}_{+}\right\} $,
\item $\mathcal{F}^{\left(1\right)}=\mathcal{L}^{\left(1\right)}(M)\cup\left\{ \tilde{m}\right\} $.
\end{itemize}
Observe that the lattice structure of $\mathcal{F}$ restricted to
the elements of rank $\ge2$ is the same as that of $\mathcal{L}(M)$.
So, if $\mathcal{F}$ is a geometric lattice, its matroid is hypermodular.
To show $\mathcal{F}$ satisfies \ref{enu:(F1)}, it suffices to show
that for any $X\cup\left\{ \tilde{m}\right\} \in\mathcal{J}^{\sharp}\cup\left\{ \tilde{m}\right\} $
and $J\cup\left\{ \tilde{m}\right\} \in\mathcal{J}_{+}\cup\left\{ \tilde{m}\right\} $
with $X\in\mathcal{J}^{\sharp}$ and $J\in\mathcal{J}_{+}$, their
intersection $\left(X\cap J\right)\cup\left\{ \tilde{m}\right\} $
is an element of $\mathcal{F}$. Indeed, by Lemma \ref{lem:Span-2}(\ref{enu:J-LB}),
either $\left(X\cap J\right)\cup\left\{ \tilde{m}\right\} =J\cup\left\{ \tilde{m}\right\} \in\mathcal{F}$
or $\left(X\cap J\right)\cup\left\{ \tilde{m}\right\} =\left\{ \tilde{m}\right\} \in\mathcal{F}$.

To show $\mathcal{F}$ satisfies \ref{enu:(F2)}, it suffices to show
that for any $X\in\left(\mathcal{L}^{\left(2\right)}(M)-\mathcal{J}_{+}\right)\cup\mathcal{L}^{\left(1\right)}(M)$,
the smallest element $X^{\ast}$ of $\mathcal{F}$ that contains $X\cup\left\{ \tilde{m}\right\} $
covers $X=\epsilon(X)$. If $X\in\mathcal{L}^{\left(2\right)}(M)-\mathcal{J}_{+}$,
there is a unique element $X^{\ast}$ of $\mathcal{F}^{\left(3\right)}$
that contains $X\cup\left\{ \tilde{m}\right\} $ by Lemma \ref{lem:Span-2}(\ref{enu:J-UB}).
Then, since $X^{\ast}-\left\{ \tilde{m}\right\} $ covers $X$ in
$\mathcal{L}(M)$, $X^{\ast}$ also covers $X$ in $\mathcal{F}$.
If $X\in\mathcal{L}^{\left(1\right)}(M)$, by Lemma \ref{lem:Span-2}(\ref{enu:J-partition}),
there is a unique element $X^{\ast}$ of $\mathcal{F}^{\left(2\right)}$
that contains $X\cup\left\{ \tilde{m}\right\} $, which covers $X$.

Thus, $\mathcal{F}$ is a geometric lattice. Let $\tilde{M}$ be its
matroid, then $\tilde{M}$ is a hypermodular matroid on $E(\tilde{M})=E(M)\cup\left\{ \tilde{m}\right\} $
with $M=\tilde{M}|_{E(M)}$ satisfying all (a)(b)(c)(d). Observe that
the modular defect of a pair of two flats never increases while the
modular defect of $\left\{ \epsilon(F),\epsilon(L)\right\} $ is $0$
which is one less than that of $\left\{ F,L\right\} $. Therefore,
the total sum of modular defects of $\tilde{M}$ is less than that
of $M$.
\end{proof}

\section{\label{sec:Geometry}Geometric Interpretation and Main Theorem}

Throughout this section, we assume our matroids are loopless. We review
first basic notions of the theory of combinatorial hyperplane arrangements
of \cite{j-hope}.
\begin{defn}
\label{def:Hyperplanes-subspaces}Fix a loopless matroid $M$. A \textbf{subspace}
of $M$ is a matroid $M/F$ for a flat $F$. When considering $M/F$
as a subspace of $M$ we often write $\eta(M/F)$ instead. The collection
of subspaces of $M$ and the dual lattice of the geometric lattice
of $M$ are isomorphic as lattices. The \textbf{subspace} \textbf{dimension}
and\textbf{ codimension in} $M$ of the subspace $\eta(M/F)$ are
defined as: 
\[
\mathrm{sdim}\,\eta(M/F):=r(M/F)-1\quad\mbox{and}\quad\mathrm{scodim}_{M}\,\eta(M/F):=r(M|_{F}).
\]
A subspace is called a \textbf{point}, a \textbf{line}, and a \textbf{plane}
if its dimension is $0$, $1$, and $2$, respectively. It is called
a \textbf{hyperplane} if its codimension is $1$.
\end{defn}

\begin{defn}
\label{def:abstract_HA}For each $i\in E(M)$, the pair $B_{i}=(\eta(M/\overline{\left\{ i\right\} }),i)$
is called a \textbf{labeled hyperplane} with \textbf{label} $i$.
A \textbf{hyperplane arrangement} $\mathrm{HA}_{M}$ of $M$ on $E(M)$
is defined as the following pair: 
\[
\mathrm{HA}_{M}:=\left(M,\{B_{i}\}_{i\in E(M)}\right).
\]
\end{defn}

The loopless matroid $M$ is realizable if and only if there is a
projective realization of $\mathrm{HA}_{M}$ over a field. In particular,
if $M$ is a rank-$3$ loopless matroid, $\mathrm{HA}_{M}$ is a line
arrangement and the geometry of $\mathrm{HA}_{M}$ is the same as
that of lines in a projective plane: every two distinct lines meet
at a point and there exists a unique line passing through two distinct
points. In general, distinct subspaces $\eta(M/F_{1}),\dots,\eta(M/F_{m})$
meet at a point if and only if $r(F_{1}\cup\cdots\cup F_{m})=r(M)-1$,
which is called the Bézout's theorem for $\mathrm{HA}_{M}$.\smallskip{}

The hypermodularity of a loopless matroid $M$ is equivalent to the
property that every two distinct points of the hyperplane arrangement
$\mathrm{HA}_{M}$ is connected by a line of $\mathrm{HA}_{M}$. For
rank-$4$ loopless matroids, the modularity means the above property
and the property that every line and any point not on it are contained
in a plane.\smallskip{}

Proposition \ref{prop:equiv} and Lemma \ref{lem:equiv} are incorporated
into the following translation.
\begin{prop}
\label{prop:real-equiv}Let $M$ be a loopless rank-$4$ hypermodular
matroid. Then, $M$ is modular if and only if every pair of a point
and a line lie on a plane if and only if every two lines intersecting
at a point lie on a plane.
\end{prop}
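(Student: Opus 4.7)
The plan is to translate both geometric conditions into the intrinsic language of flats of $M$ via the dictionary of Definition~\ref{def:Hyperplanes-subspaces}, and then invoke Proposition~\ref{prop:equiv} and Lemma~\ref{lem:equiv} directly; no new combinatorial work is needed.

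First I would pin down the dictionary. Since $r(M)=4$, the formula $\mathrm{sdim}\,\eta(M/F)=r(M/F)-1$ identifies points, lines, and planes of $\mathrm{HA}_M$ with flats of $M$ of rank $3$, $2$, and $1$, respectively, and containment of subspaces is dual to inclusion of the corresponding flats. Consequently, a point $\eta(M/F_p)$ and a line $\eta(M/F_\ell)$ lie on a common plane iff there is a rank-$1$ flat contained in $F_p\cap F_\ell$; since $M$ is loopless this amounts to $F_p\cap F_\ell\neq\emptyset$. Likewise, by the B\'ezout statement recorded earlier, two distinct lines $\eta(M/L_1), \eta(M/L_2)$ meet at a point iff $r(L_1\cup L_2)=3$, i.e.\ iff $\overline{L_1\cup L_2}$ is a rank-$3$ flat containing both of them, and they lie on a common plane iff $L_1\cap L_2\neq\emptyset$.

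Next I would read off the two geometric hypotheses. The first, ``every pair of a point and a line lie on a plane'', becomes the matroidal statement ``no rank-$3$ flat of $M$ is disjoint from any rank-$2$ flat of $M$''. The second, ``every two lines intersecting at a point lie on a plane'', becomes ``no rank-$3$ flat of $M$ contains two disjoint rank-$2$ flats''.

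To close, I would apply Lemma~\ref{lem:equiv} to equate the first rephrasing with the modularity of $M$, and Proposition~\ref{prop:equiv} to equate the two rephrasings under the standing hypermodularity hypothesis. The only real bookkeeping point is keeping the projective duality between subspaces and flats straight, together with using looplessness to rule out $\emptyset$ as a would-be plane; beyond that I do not foresee any obstacle, as the statement is effectively a direct transcription of the two earlier results into the geometric vocabulary of $\mathrm{HA}_M$.
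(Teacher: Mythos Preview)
Your proposal is correct and matches the paper's approach exactly: the paper presents this proposition without proof, merely remarking that ``Proposition~\ref{prop:equiv} and Lemma~\ref{lem:equiv} are incorporated into the following translation,'' and your write-up is precisely that translation spelled out via the subspace/flat dictionary of Definition~\ref{def:Hyperplanes-subspaces}. The only content is the bookkeeping you identified---points, lines, planes correspond to flats of rank $3$, $2$, $1$, incidence dualizes to inclusion, and looplessness turns ``contains a rank-$1$ flat'' into ``nonempty''---after which Proposition~\ref{prop:equiv} and Lemma~\ref{lem:equiv} apply verbatim.
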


From Lemma \ref{lem:Disjoint0}, we obtain the following.
\begin{lem}
\label{lem:real-disjoint}Let $M$ be a loopless rank-$4$ hypermodular
matroid that is not modular. Let $\eta(M/F)$ and $\eta(M/L)$ be
a point and a line, respectively, such that no plane contains both,
and let $\eta(M/A_{i})$, $i\in\left[n\right]$, be all the points
lying on the line $\eta(M/L)$. Then, every plane contains at least
one of those points $\eta(M/A_{i})$, $i\in\left[n\right]$.
\end{lem}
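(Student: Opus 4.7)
The plan is a direct translation from projective language back to the lattice-theoretic statement of Lemma \ref{lem:Disjoint0}. By Definition \ref{def:Hyperplanes-subspaces}, in rank $4$ a point $\eta(M/F)$ corresponds to a rank-$3$ flat $F$, a line $\eta(M/L)$ corresponds to a rank-$2$ flat $L$, and a plane corresponds to a rank-$1$ flat. The incidence order on subspaces is dual to inclusion of flats, so a plane represented by a rank-$1$ flat $G$ contains the point $\eta(M/F)$ iff $G\subseteq F$, and a point $\eta(M/A_{i})$ lies on the line $\eta(M/L)$ iff $L\subseteq A_{i}$.

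First I would verify that the hypothesis ``no plane contains both $\eta(M/F)$ and $\eta(M/L)$'' is precisely $F\cap L=\emptyset$. Indeed, if $e\in F\cap L$, then since $M$ is loopless $\overline{\{e\}}$ is a rank-$1$ flat contained in both $F$ and $L$, which yields a plane through the given point and line; conversely, a rank-$1$ flat contained in both supplies such an element $e$. With the given rank data, this puts us in the setting of Lemma \ref{lem:Disjoint0}, and the flats $A_{i}$ of the present statement coincide with the $A_{i}$ appearing there, namely all rank-$3$ flats containing $L$.

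The conclusion ``every plane contains some $\eta(M/A_{i})$'' then reads: every rank-$1$ flat $G$ satisfies $G\subseteq A_{i}$ for some $i$, equivalently every element of $E(M)$ belongs to some $A_{i}$, that is, $E(M)=A_{1}\cup\cdots\cup A_{n}$. This is exactly the covering assertion proved in Lemma \ref{lem:Disjoint0}, so once the dictionary is set up there is nothing further to show.

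I do not expect any real obstacle. The only things to be careful about are the order-reversal between the subspace lattice and the flat lattice, and using that $M$ is loopless in order to pass freely between ``rank-$1$ flat contained in $F$'' and ``element of $F$''; both are routine.
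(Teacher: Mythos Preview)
Your proposal is correct and matches the paper's approach: the paper simply states that the lemma is obtained from Lemma~\ref{lem:Disjoint0}, and your argument spells out precisely the flat--subspace dictionary needed to see this, reducing the conclusion to the covering $E(M)=A_{1}\cup\cdots\cup A_{n}$ established there.
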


Now, we prove the main theorem of this paper.
\begin{thm}
\label{thm:main}Every realizable rank-$4$ hypermodular matroid is
a restriction of a realizable rank-$4$ modular matroid.
\end{thm}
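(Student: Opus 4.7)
The proof proceeds by strong induction on the total sum of modular defects of $M$. The base case is $M$ modular (defect zero), where the ambient modular matroid is $M$ itself. So assume $M$ is rank-$4$ hypermodular, realizable, and non-modular, and that the theorem holds for every realizable rank-$4$ hypermodular matroid of strictly smaller total defect.

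By Lemma \ref{lem:equiv}, there exist disjoint flats $F$ of rank $3$ and $L$ of rank $2$ in $M$. Fix a projective realization of $M$ in $\mathbb{P}^{3}$ over the base field. Then $F$ realizes as a plane and $L$ as a line. Since $L\cap F=\emptyset$ as flats, the line $L$ is not contained in the plane $F$, and they meet at a unique point $\tilde{m}\in\mathbb{P}^{3}$, which is distinct from every ground point of $M$ (else $\tilde{m}$ would be a common ground element of $F$ and $L$). Adjoining $\tilde{m}$ to the realization yields a realizable matroid $\tilde{M}$ on $E(M)\cup\{\tilde{m}\}$ with $\tilde{M}|_{E(M)}=M$.

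To show $\tilde{M}$ is hypermodular with strictly smaller total modular defect, I invoke Theorem \ref{thm:criterion} applied to the pair $(F,L)$: it suffices to check criterion (\ref{enu:Ext1}), namely $\overline{J\cup J'}\in\mathcal{J}^{\sharp}$ for every two distinct $J,J'\in\mathcal{J}_{+}$. In the realization, each $T\in\mathcal{T}$ passes through $\tilde{m}$: this is immediate for $L$, and for $T_{i}=A_{i}\cap F$ it follows from $\tilde{m}\in L\subset A_{i}$ and $\tilde{m}\in F$. Every plane realizing a flat of $\mathcal{J}^{\sharp}$ therefore contains $\tilde{m}$, and each $J\in\mathcal{J}$, being contained in at least two such planes, coincides with their intersection line through $\tilde{m}$. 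Hence every $J\in\mathcal{J}_{+}$ realizes as a line through $\tilde{m}$, and any two distinct $J,J'\in\mathcal{J}_{+}$ span a rank-$3$ plane $P=\overline{J\cup J'}$ through $\tilde{m}$.

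The remaining technical step, which is the main expected obstacle, is to show that this spanned plane $P$ always contains some $T\in\mathcal{T}$. The plan is to use the covering $E(M)=A_{1}\cup\cdots\cup A_{n}$ from Lemma \ref{lem:Disjoint0}, together with the inseparability of each $A_{i}$ (Proposition \ref{prop:Insep-rk3}), to force the intersection line $P\cap F$ of $P$ with the plane $F$ to coincide with one of the $T_{i}$, so that $P\supseteq T_{i}\in\mathcal{T}$. The delicate point is ruling out spanned planes through $\tilde{m}$ which miss every $T$; these would correspond to ground elements of $M$ collinear with $\tilde{m}$ along new directions, and the connectivity of each $A_{i}$ inside $M$ together with the rigidity coming from $\mathbb{P}^{3}$-realizability is what I expect to preclude them. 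Once criterion (\ref{enu:Ext1}) is verified, Theorem \ref{thm:criterion} delivers that $\tilde{M}$ is hypermodular with strictly decreased total defect. Since $\tilde{M}$ is realizable by construction, the inductive hypothesis produces a realizable rank-$4$ modular matroid containing $\tilde{M}$, and therefore $M$, as a restriction.
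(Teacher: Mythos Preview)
Your overall strategy coincides with the paper's: verify condition (\ref{enu:Ext1}) of Theorem \ref{thm:criterion} using a projective realization, obtain a hypermodular single-element extension with strictly smaller total defect, and iterate. The paper works in the dual (hyperplane-arrangement) picture of Section \ref{sec:Geometry}, where $\sigma(M/F)$ is a point and $\sigma(M/L)$ a line, while you work primally with $F$ a plane and $L$ a line meeting at $\tilde m$; the two pictures are equivalent, and your observation that every $J\in\mathcal J_+$ is realized as a line through $\tilde m$ is exactly the dual of the paper's statement that all $\sigma(M/J)$ lie in a common plane.

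The genuine gap is that you do not prove the decisive step: you only announce a ``plan'' and say realizability ``is what I expect to preclude'' bad planes. That step is the entire content beyond Theorem \ref{thm:criterion}, and the tool you reach for (inseparability of the $A_i$, Proposition \ref{prop:Insep-rk3}) is not what is needed. The missing argument is short. Assume $P:=\overline{J\cup J'}\neq F$; by hypermodularity $T:=P\cap F$ is a rank-$2$ flat, realized as a line through $\tilde m$ in the plane $F$. Pick any ground element $t\in T$; by Lemma \ref{lem:Disjoint0} one has $t\in T_i$ for a unique $i$, and since $t\neq\tilde m$ the line through $t$ and $\tilde m$ is unique, so the realized lines of $T$ and $T_i$ coincide, whence $T=T_i\in\mathcal T$ and $P\in\mathcal J^{\sharp}$. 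The paper executes the dual version: the line $\ell$ joining the points $\sigma(M/F)$ and $\sigma(M/P)$ is $\sigma(M/T)$, and its intersection with $\sigma(M/L)$ is forced (via Lemma \ref{lem:real-disjoint}, i.e.\ Lemma \ref{lem:Disjoint0}) to be some $\sigma(M/A_i)$, giving $T\subset F\cap A_i=T_i$. No connectivity argument enters.
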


\begin{proof}
Let $M$ be a realizable rank-$4$ hypermodular matroid that is not
modular. We may assume that $M$ is loopless. By Lemma \ref{lem:equiv},
there are two disjoint flats $F$ and $L$ of rank $3$ and $2$,
respectively. Assume the setting of Lemma \ref{lem:Span-1}. Since
$M$ is realizable, $\mathrm{HA}_{M}$ has a realization in the projective
space $\mathbb{P}^{3}$ over some field. For $X\in\mathcal{L}(M)$,
denote by $\sigma(M/X)$ the realization of $\eta(M/X)$ in $\mathbb{P}^{3}$.

Then, all lines $\sigma(M/T_{i})$, $i\in\left[n\right]\cup\left\{ 0\right\} $,
are contained in a plane of $\mathbb{P}^{3}$. Moreover, the line
$\sigma(M/J)$ with any $J\in\mathcal{J}$, which passes through two
distinct points on the plane, intersects all those lines $\sigma(M/T_{i})$,
$i\in\left[n\right]\cup\left\{ 0\right\} $. Thus, every line $\sigma(M/J)$
with $J\in\mathcal{J}_{+}$ intersects any other line $\sigma(M/J')$
with $J'\in\mathcal{J}_{+}-\left\{ J\right\} $. This implies that
$r(\overline{J\cup J'})=3$ for every two distinct $J,J'\in\mathcal{J}_{+}$.
Suppose $\overline{J\cup J'}\neq F$. Then, there is a unique line
$\ell$ in $\mathbb{P}^{3}$ that connects two distinct points $\sigma(M/F)$
and $\sigma(M/\overline{J\cup J'})$. By Lemma \ref{lem:real-disjoint},
$\ell$ passes through some point $\sigma(M/A_{i})$, and hence $\ell=\sigma(M/T_{i})$.
Therefore, $\overline{J\cup J'}$ contains $T_{i}\in\mathcal{T}$
and so $\overline{J\cup J'}\in\mathcal{J}^{\sharp}$. Thus, we showed:
\[
\mathcal{J}^{\sharp}\supseteq\left\{ \overline{J\cup J'}:J,J'\in\mathcal{J}_{+},J\neq J'\right\} .
\]
 By Theorem \ref{thm:criterion}, there is a rank-$4$ single-element
extension $\tilde{M}$ of $M$ whose total sum of modular defects
is less than that of $M$. Since $\tilde{M}$ is hypermodular and
realizable, we repeat this extension process until we obtain a rank-$4$
matroid such that either there is no pair of two disjoint flats of
rank $3$ and $2$, respectively, or the total sum of modular defects
is $0$. This realizable matroid is modular by Lemma \ref{lem:equiv}
and its restriction to $E(M)$ is $M$. The proof is complete.
\end{proof}
As a corollary, we obtain the following theorem.
\begin{thm}
Both the sticky matroid conjecture and Kantor's conjecture hold for
realizable matroids.
\end{thm}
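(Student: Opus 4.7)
The plan is to deduce this theorem directly from Theorem \ref{thm:main} by assembling the two external reductions recalled in the introduction, and to verify that both reductions remain within the realizable class.

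First, for the sticky matroid conjecture restricted to realizable matroids, I would apply the argument of Bachem, Kern, and Bonin \cite{BK88,Bonin}, which reduces the sticky matroid conjecture to showing that every rank-$4$ hypermodular matroid arises as a restriction of a rank-$4$ modular matroid of the same rank. Given a realizable sticky matroid $M$, the rank-$4$ hypermodular matroid that appears in their reduction is obtained from $M$ by minor-type operations and therefore inherits realizability from $M$. Theorem \ref{thm:main} then supplies the required rank-$4$ modular extension (realizable over the same field), and the remainder of the Bachem--Kern--Bonin argument carries through unchanged to force $M$ to be modular.

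For Kantor's conjecture, I would invoke the equivalence of Hochst\"attler and Wilhelmi \cite{HW19}, who establish that Kantor's conjecture and the sticky matroid conjecture are logically equivalent. Because their equivalence is constructive and proceeds through operations that preserve realizability, Kantor's conjecture for the realizable class follows at once from the sticky matroid conjecture for the realizable class established in the previous step.

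The only genuine obstacle is to confirm that realizability is indeed preserved all the way through both reductions, so that the appeal to Theorem \ref{thm:main} lands inside its hypotheses. The hard part is therefore not any new calculation but a careful audit of \cite{BK88,Bonin} and \cite{HW19} to check that each intermediate construction used there (restriction, contraction, amalgamation, common extension) can be carried out over the field of realization without leaving the realizable class. Since all of these are standard realizability-preserving operations, this audit is routine, and Theorem \ref{thm:main} supplies the missing rank-$4$ input at the bottleneck of each argument.
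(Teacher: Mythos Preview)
Your proposal is essentially the same approach as the paper's. The paper gives no explicit proof at all for this theorem---it is stated bare as a corollary of Theorem \ref{thm:main}, with the reductions to \cite{BK88,Bonin} and \cite{HW19} already spelled out in the introduction---and your plan simply fleshes out that deferral. One small point of phrasing: the Bachem--Kern--Bonin argument runs contrapositively (start with a non-modular realizable $M$, pass to a rank-$4$ minor, and produce extensions with no amalgam), so the rank-$4$ hypermodular matroid is a minor of a \emph{non-modular} $M$ rather than of a sticky one; but since minors preserve realizability this does not affect your audit, and the logic is otherwise as you describe.
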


\end{document}